\newcommand{\Lring}{\cL_{\text{ring}}}
\def\ac{{\overline{\rm ac}}}
\def\LPres{\cL_{\rm Pres}}
\def\NN{{\mathbb N}}
\def\QQ{{\mathbb Q}}
\def\ZZ{{\mathbb Z}}
\def\acm{{\mathrm{ac}_m}}
\def\ac1{{\mathrm{ac}_1}}
\def\Div{{\mathrm{div}}}
\def\dif{{\mathrm{dif}}}
\def\modus{{\text{mod }}}
\def\cL{{\mathcal L}}
\def\cO{{\mathcal O}}
\def\Br{\mathrm{Br}}
\def\rad{\mathrm{rad}}
\newtheorem{theorem}[subsubsection]{Theorem}
\newtheorem{lemma}[theorem]{Lemma}
\newtheorem{corollary}[theorem]{Corollary}
\newtheorem{proposition}[theorem]{Proposition}
\newtheorem{claim}[theorem]{Claim}
\newenvironment{customthm}[1]
  {\innercustomthm}
  {\endinnercustomthm}
\theoremstyle{definition}
\newtheorem{definition}[theorem]{Definition}
\newtheorem{convention}[theorem]{Convention}
\newtheorem*{definition*}{Definition}
\theoremstyle{remark}
\newtheorem{remark}[theorem]{Remark}
\theoremstyle{plain}
\numberwithin{equation}{theorem}
\begin{document}

\setcounter{tocdepth}{1} 

\title[Definable completeness of $P$-minimal fields and applications]{Definable completeness of $P$-minimal fields and applications}

\author[Pablo Cubides Kovascics]{Pablo Cubides Kovacsics}
\address{Pablo Cubides Kovacsics, Mathematisches Institut der Heinrich-Heine-Universit\"at D\"usseldorf, 
Universit\"atsstr. 1, 40225 D\"usseldorf, Germany. }
\email{cubidesk@hhu.de}

\author[Fran\c coise Delon]{Fran\c coise Delon}
\address{Fran\c coise Delon, Universit\'e de Paris and Sorbonne Universit\'e, CNRS, Institut de Math\'ematiques de Jussieu-Paris Rive Gauche, F-75006 Paris, France.}
\email{delon@math.univ-paris-diderot.fr}

\begin{abstract} We show that every definable nested family of closed and bounded subsets of a $P$-minimal field $K$ has non-empty intersection. As an application we answer a question of Darni\`ere and Halupczok showing that $P$-minimal fields satisfy the ``extreme value property'': for every closed and bounded subset $U\subseteq K$ and every interpretable continuous function $f\colon  U \to \Gamma_K$ (where $\Gamma_K$ denotes the value group), $f(U)$ admits a maximal value. Two further corollaries are obtained as a consequence of their work. The first one shows that every interpretable subset of $K\times\Gamma_K^n$ is already interpretable in the language of rings, answering a question of Cluckers and Halupczok. This implies in particular that every $P$-minimal field is polynomially bounded. The second one characterizes those $P$-minimal fields satisfying a classical cell preparation theorem as those having definable Skolem functions, generalizing a result of Mourgues. 
\end{abstract}

\maketitle

A celebrated result of Miller \cite{miller94} shows that every o-minimal expansion of the real field is either polynomially bounded or the exponential function is definable in it. In contrast, it follows from the work of Darni\`ere and Halupczok \cite{darniereETAL:2015} that every $P$-minimal expansion of $\mathbb{Q}_p$ is polynomially bounded. In fact, they showed more generally that every $P$-minimal expansion of $\mathbb{Q}_p$ is \emph{relatively $P$-minimal}, that is, every interpretable subset of $\mathbb{Q}_p\times\mathbb{Z}^n$ (where $\mathbb{Z}$ stands here for the value group) is already interpretable in the language of rings. However, the question whether every $P$-minimal field is relatively $P$-minimal remained open. We settle this question as a consequence of the following strong form of definable completeness for $P$-minimal fields, which yields in particular that all $P$-minimal fields are polynomially bounded.

\begin{customthm}{(A)}\label{thm:A} Let $K$ be a $P$-minimal field. Every definable nested family of closed and bounded subsets of $K$ has non-empty intersection.  
\end{customthm}

Let us start by putting the previous theorem in context. Recall that a valued field $(K,v)$ is spherically complete if every nested family of balls has non-empty intersection. It is complete if the same condition holds for nested families of balls for which the set of radii is cofinal in the value group of $(K,v)$. Examples of spherically complete fields include all locally compact valued fields and Hahn fields like $K(\!(t^\mathbb{R})\!)$ for any field $K$. The field $\mathbb{C}_p$ is an example of a complete but not spherically complete valued field. 

For first order expansions of a valued field $(K,v)$, \emph{definable completeness} and \emph{definable spherical completeness} correspond to the analogous conditions restricted to \emph{definable} nested families of balls. These are weaker conditions: without being spherically complete, $\mathbb{C}_p$ is definably spherically complete in the language of valued fields $\cL_{\Div}=(+,-,\cdot,0,1,\Div)$, where the binary predicate $\Div(x,y)$ is interpreted by $v(x)\leq v(y)$. Respectively, any countable elementary substructure of $\mathbb{C}_p$ in $\cL_{\Div}$ is definably complete but not complete.

Since definable spherical completeness (and definable completeness) is first-order expressible, it is not difficult to see that all $p$-adically closed and all algebraically closed valued fields are definably spherically complete in $\cL_{\Div}$. It is therefore natural to ask whether these properties are preserved in tame expansions of such fields. Concerning algebraically closed valued fields, the second author showed in \cite{delonCorps:12} that there are $C$-minimal expansions of algebraically closed valued fields which are not even definably complete. As shows Theorem \ref{thm:A}, this does not arise in $P$-minimal expansions of $p$-adically closed fields, which shows a strong difference between these two notions of minimality.

The idea of considering definable nested families of closed and bounded sets (instead of just balls) can be traced back to the work of Miller \cite{miller2001} on definable completeness in ordered structures. To briefly explain how Theorem \ref{thm:A} relates to other properties of $P$-minimal fields and how it is used to settle some open questions in this area, let us first give some informal background on cell decomposition and cell preparation. All formal definitions will be given in Section 1. 


Let $K$ be a $P$-minimal field, $\Gamma_K$ denote the value group of $K$ and $v\colon K \to \Gamma_K\cup\{\infty\}$ denote the valuation map. In \cite{mou-09}, Mourgues 
characterized the class of $P$-minimal fields satisfying a classical cell decomposition theorem as the class of $P$-minimal fields having definable Skolem functions (see later Theorem \ref{thm:mou}). Keeping the discussion informal, by classical cell decomposition we mean that every definable set can be decomposed into finitely many cells which are defined in the spirit of Denef's classical definition in \cite{denef-86}. In his original result, Denef proved more than just a cell decomposition result as he also partitioned the domain of a definable function into finitely many cells in which the function satisfies further properties. Although Denef did not use this terminology, we will make the distinction and call this second and a priori stronger result about definable functions \emph{classical cell preparation}. 

After \cite{mou-09}, it remained open if the class of $P$-minimal fields having definable Skolem functions further satisfies a classical cell preparation theorem. In \cite{darniereETAL:2015}, Darni\`ere and Halupczok characterized the class of $P$-minimal fields satisfying such a preparation theorem as the class of $P$-minimal fields having definable Skolem functions and satisfying the following additional property (see later Theorem \ref{thm:DH}).
 
\begin{definition*}[Extreme value property]\label{def:evp} For every closed and bounded subset $U\subseteq K$ and every interpretable continuous function $f\colon  U \to \Gamma_K$, $f(U)$ admits a maximal value.  
\end{definition*} 

Cell-preparation was obtained in \cite{darniereETAL:2015} by first showing that $P$-minimal fields with the extreme value property are relatively $P$-minimal, i.e., every interpretable subset of $K\times \Gamma_K^n$ is interpretable in $\Lring$. Although the extreme value property can be easily verified for $P$-minimal expansions of $\mathbb{Q}_p$, 
it remained unknown whether the extreme value property and/or relative $P$-minimality hold in every $P$-minimal field (or even in every $P$-minimal field with definable Skolem functions). We use Theorem \ref{thm:A} precisely to show that every $P$-minimal field has the extreme value property. 

\begin{customthm}{(B)}\label{thm:B} Every $P$-minimal field has the extreme value property.   
\end{customthm}

As a consequence of the results in \cite{darniereETAL:2015}, we obtain thus the following. 

\begin{customthm}{(C)}\label{thm:C} Every $P$-minimal field is relatively $P$-minimal.   
\end{customthm}

As mentioned above, Theorem \ref{thm:C} yields that every $P$-minimal field is polynomially bounded (for a formal definition see the introduction of \cite{cubi-delon-IJM}). We would like to point out that it remains open to know whether every $C$-minimal expansion of an algebraically closed non-trivially valued field is polynomially bounded. Some partial results in this direction appear in \cite{cubi-delon-IJM}, where the authors show that every $C$-minimal expansion of an algebraically closed field with value group $
\mathbb{Q}$ (e.g., $\mathbb{C}_p$, $\overline{\mathbb{F}_p}^{\mathrm{alg}}(\!(t^\mathbb{Q})\!)$) is polynomially bounded. 

The following is another corollary of Theorem \ref{thm:B} and the work of Darni\`ere and Halupczok. 

\begin{customthm}{(D)}\label{thm:D} Let $(K,\cL)$ be a $P$-minimal field. Then the following are equivalent
\begin{enumerate}
\item $(K,\cL)$ has definable Skolem functions
\item $(K,\cL)$ has classical cell preparation. 
\end{enumerate}
\end{customthm}

It is worthy to mention that $P$-minimal fields without definable Skolem functions do exist by a result of the first author and Nguyen \cite{cubi-nguyen}. 

\

The article will be structured as follows. In Section \ref{sec:1} we provide all needed background on $P$-minimality including definitions of cells, cell decomposition and cell preparation. We will follow the terminology from \cite{cham-cubi-leen} and make essential use of the clustered cell decomposition theorem proven there. Definable nested families are introduced in Section \ref{sec:nested families}, where we prove Theorem \ref{thm:A} and its consequences.

\section{Preliminaries}\label{sec:1}

Throughout this article we let $K$ denote a $p$-adically closed field, that is, a field elementarily equivalent to a finite extension of $\QQ_p$ in the language of rings $\Lring$. Note that $\Div$ is $\Lring$-definable in such a field. We let $\Gamma_K$ denote the value group, $v\colon K\to\Gamma_K\cup\{\infty\}$ the valuation map, $\cO_K$ the valuation ring and $k_K$ the residue field. For a subset $Y\subseteq \Gamma_K$ and $\gamma\in \Gamma_K$, we define $Y_{>\gamma}\coloneqq \{\gamma'\in Y: \gamma<\gamma'\}$. Concerning balls, $B_{\gamma}(a)$  denotes the ball around $a$ with radius $\gamma$:
\[
B_{\gamma}(a)\coloneqq  \{ x \in K  :  v(x-a) \geqslant \gamma\}. 
\]
The topological closure of a set $X\subseteq K^n$ is denoted by $cl(X)$. Let $\varpi_K$ be a uniformizer in $K$. For a positive integer $m >0$, write $\acm\colon  K^{\times} \to (\cO_K/\varpi_K^m\cO_K)^{\times}$ for the $m^\text{th}$ \emph{angular component map}, the unique group homomorphism such that $\acm(\varpi_K) =1$ and $\acm(u) \equiv u \mod \varpi_K^m$ for any unit $u \in \cO_K$. Existence, uniqueness and definability of such maps was shown in Lemma 1.3 of \cite{clu-lee-2011}. We extend them to $K$ by setting $\acm(0) = 0$. For positive integers $n,m$, let $Q_{n,m}$ be the set
\[ 
Q_{n,m} \coloneqq  \{ x \in K^{\times}  :  v(x) \equiv 0 \ (\modus n) \text{ and } \acm(x) = 1\}.
\]
Note that for $\lambda\in K^\times$ and $x \in \lambda Q_{n,m}$, $\lambda$ encodes the values of $v(x) (\modus n)$ and $\acm(x)$. 

\

For $\cL$ a language extending $\Lring$, the structure $(K,\cL)$ is \emph{$P$-minimal} if for every structure $(K',\cL)$ elementarily equivalent to $(K,\cL)$, every $\cL$-definable subset $X\subseteq K'$ is $\Lring$-definable. Hereafter, $\cL$-definable means definable with parameters in the language $\cL$. For our purposes, it will be sometimes convenient to work in a two sorted language $\cL_2$ where we include the value group as a new sort in the language of Presburger arithmetic $\LPres\coloneqq {(+,-,<,(\equiv_n)_{n\in\NN^\ast})}$ (for details see \cite[Section 2]{cubi-leen-2015}). We write $(K,\cL_2)$ to indicate that we work in the two-sorted language. The following result of Cluckers shows in particular that $\cL_2$-definable subsets of $\Gamma_K$ are $\LPres$-definable.  

%
%
%
%

\begin{theorem}[Cluckers\cite{clu-presb03}\label{thm:raf} Lemma 2 and Theorem 6]\label{thm:semialgpres}
Let $(K, \cL_2)$ be a $P$-minimal field. The value group is stably embedded and its induced structure is that of a pure $\ZZ$-group. In addition, if $Y \subseteq \Gamma_K^m$ is definable, $v^{-1}(Y)$ is $\Lring$-definable.\qed
\end{theorem}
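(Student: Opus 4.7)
The plan is to prove both assertions simultaneously by induction on the arity $m$, using $P$-minimality as the bridge between $\cL_2$-definable subsets of $\Gamma_K$ and $\Lring$-definable subsets of $K$.

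For the base case $m=1$, given an $\cL_2$-definable set $Y\subseteq\Gamma_K$, the preimage $v^{-1}(Y)\subseteq K$ is an $\cL_2$-definable subset of the main sort, so $P$-minimality forces it to be $\Lring$-definable. I would then invoke the classical $p$-adic cell decomposition of Denef to partition $v^{-1}(Y)\setminus\{0\}$ into finitely many cells, each of whose valuation set is $\LPres$-definable (since the defining conditions on cells involve Presburger constraints on $v(x-c)$ and angular component conditions via $\acm$). Projecting under $v$ and noting that the image consists only of valuations of nonzero elements, one recovers $Y$ as a Boolean combination of Presburger-definable sets with parameters from $\Gamma_K$.

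For the inductive step, consider an $\cL_2$-definable $Y\subseteq\Gamma_K^m$. Slice by the last $m-1$ coordinates and apply the base case uniformly to each one-dimensional slice. Once $Y$ has been shown $\LPres$-definable over $\Gamma_K$, say by a formula $\varphi(\bar y; v(c_1),\ldots,v(c_k))$ with $c_i\in K^\times$, the set $v^{-1}(Y)\subseteq K^m$ is $\Lring$-definable: $\Div$ is $\Lring$-definable by assumption, and the Presburger congruences $\equiv_n$ on values translate via Macintyre's $n$-th power predicates $P_n(x)\equiv \exists y\,(x=y^n)$ (combined with units) into $\Lring$-formulas. Stable embedding of $\Gamma_K$ and the fact that its induced structure is that of a pure $\ZZ$-group follow immediately: any $\cL_2$-definable subset of $\Gamma_K^n$ (with parameters from the full structure) has been shown to be $\LPres$-definable with parameters only from $\Gamma_K$.

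The main obstacle I anticipate is the uniformity in the inductive step: transferring pointwise Presburger-definability of slices into a single global $\LPres$-formula for the whole family requires a genuinely uniform version of $p$-adic cell decomposition in parameters, which must itself be derived from (or compatible with) the $P$-minimality hypothesis. By contrast, the one-variable reduction via $P$-minimality, followed by cell decomposition and projection under $v$, is conceptually the real crux of the argument.
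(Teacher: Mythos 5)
The paper offers no proof of this statement: it is imported verbatim from Cluckers (Lemma 2 and Theorem 6 of \cite{clu-presb03}) with a \emph{qed} attached, so the only meaningful comparison is with the argument one would actually have to supply. Your base case $m=1$ is essentially correct and is, as you say, the conceptual crux: $v^{-1}(Y)$ is a one-variable definable set, hence $\Lring$-definable by $P$-minimality, and Denef's one-variable cell decomposition exhibits $Y=v(v^{-1}(Y))$ as a finite union of $\LPres$-definable sets; the reverse translation of Presburger order and congruence conditions into $\Lring$ via $\Div$ and the sets $Q_{n,m}$ is also standard.

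The inductive step, however, is where the proposal stops being a proof. You write ``apply the base case uniformly to each one-dimensional slice'' and then yourself identify uniformity as the obstacle --- but you never resolve it, and it cannot be waved away: knowing that each fibre $Y_{\bar\gamma'}\subseteq\Gamma_K$ is $\LPres$-definable gives, a priori, a different formula for each $\bar\gamma'$ and no single formula for $Y$. The missing ingredient is a compactness argument exploiting the precise form of the definition of $P$-minimality: the hypothesis is imposed on \emph{every} structure elementarily equivalent to $(K,\cL)$, which is exactly what allows the usual ``finitely many formulas suffice uniformly'' argument (if no finite list of $\Lring$-formulas covered all the fibres $v^{-1}(Y_{\bar\gamma'})$, a suitable elementary extension would contain a fibre that is not $\Lring$-definable, contradicting $P$-minimality there). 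Even granting uniformity, there is a second issue you do not address: the uniform Presburger formula $\phi(\gamma_1;\bar z)$ describing the fibres carries parameters $\bar\delta\in\Gamma_K^{k}$ varying with $\bar\gamma'$, and the set of pairs $(\bar\gamma',\bar\delta)$ for which $\phi(\cdot\,;\bar\delta)$ defines $Y_{\bar\gamma'}$ is a definable subset of $\Gamma_K^{m-1+k}$, of arity at least $m$; so the induction on arity does not close as stated and must be reorganized. These are precisely the points Cluckers' argument is built to handle, and any self-contained write-up has to supply them.
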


\begin{remark}\label{pubdd} As a consequence of the previous theorem, every $\cL_2$-definable bounded set $Y\subseteq \Gamma_K$ has a maximal element. Equivalently, if $Y$ has no maximal element, it must be cofinal in $\Gamma_K$. This shows in particular that for $P$-minimal fields, the notions of definable completeness and definable spherical completeness are equivalent. 
\end{remark}

\subsection{Cells, cell decomposition and cell preparation}\label{sec:cells}

From now on we work in a $P$-minimal field $(K,\cL_2)$. By definable we mean $\cL_2$-definable. We will use `and' for logical conjunction since the symbol `$\wedge$' will be reserved for something else (see later Section \ref{sec:tree}). Let $S$ denote a definable parameter set (i.e. a definable subset of some product of sorts which will play the role of parameters). A \emph{$\Gamma_K$-cell condition over $S$} is a formula of the form
\begin{equation}\label{eq:Gammacellcond}\tag{E1}
C(s, \gamma) \coloneqq   s\in S \text{ and } \alpha(s) \ \square_1 \ \gamma \ \square_2 \ \beta(s) \text{ and } \gamma\equiv k \ (\modus n) \,, 
\end{equation} 
where $\alpha, \beta$ are definable functions $S \to \Gamma_K$, squares $\square_1,\square_2$ may denote either $<$ or $\emptyset$ (i.e. `no condition'), $\gamma$ is a variable ranging over $\Gamma_K$ and $0\leqslant k< n$ are two integers. If $S=\emptyset$, then $\alpha,\beta$ simply denote elements of $\Gamma_K$.  A \emph{$\Gamma_K$-cell over $S$} is simply the set of elements satisfying a $\Gamma_K$-cell condition over $S$. 

Let $D\subseteq \Gamma_K$ be a $\Gamma_K$-cell defined by a cell condition $C$ as in (\ref{eq:Gammacellcond}) over $S=\emptyset$ (hence fixing $k$ and $n$). A function $g\colon D\to \Gamma_K$ is said to be \emph{linear} if 
\[
g(\gamma)=\frac{a(\gamma-k)}{n} + \delta,  
\]
where $a\in \mathbb{Z}$ and $\delta\in\Gamma_K$. Using Theorem \ref{thm:raf}, the following is a special case of \cite[Theorem 1]{clu-presb03}: 

\begin{theorem}[Cluckers]\label{thm:raf-function} Let $(K,\cL_2)$ be a $P$-minimal field. Let $g\colon Y\subseteq \Gamma_K\to \Gamma_K$ be a definable function. Then there is a finite partition of $Y$ into $\Gamma_K$-cells $Y_1,\ldots, Y_n$ such that $g_{|Y_i}$ is linear. \qed
\end{theorem}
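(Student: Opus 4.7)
The plan is to derive this statement as a direct consequence of the preceding Theorem \ref{thm:raf} together with the classical Presburger cell decomposition theorem for definable functions (which is the pure Presburger content of \cite{clu-presb03}, Theorem 1). The point is that the hypothesis of $P$-minimality enters only through the stable embeddedness of $\Gamma_K$ and the identification of its induced structure with a pure $\ZZ$-group; after this identification, the problem is entirely about Presburger-definable functions between Presburger-definable sets, and nothing about $K$ itself is needed.

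Concretely, I would first consider the graph
\[
\mathrm{Graph}(g) = \{(\gamma,\delta)\in \Gamma_K\times\Gamma_K : \gamma\in Y \text{ and } \delta=g(\gamma)\},
\]
which is an $\cL_2$-definable subset of $\Gamma_K^2$. By Theorem \ref{thm:raf}, $\Gamma_K$ is stably embedded and its induced structure is that of a pure $\ZZ$-group, so $\mathrm{Graph}(g)$ is $\LPres$-definable (with parameters from $\Gamma_K$), and similarly $Y$ is $\LPres$-definable. Thus $g$ can be viewed as a Presburger-definable function $Y\to \Gamma_K$.

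Next I would invoke the Presburger cell decomposition theorem for definable functions (Theorem 1 of \cite{clu-presb03}): any Presburger-definable function on a Presburger-definable subset of $\Gamma_K$ admits a finite partition of its domain into Presburger cells on which it becomes linear in the Presburger sense, i.e.\ of the form $\gamma\mapsto a(\gamma-k)/n + \delta$ where the congruence class of $\gamma$ modulo $n$ is fixed to $k$ on the cell. One then checks that Presburger cells in one variable are exactly the $\Gamma_K$-cells over $S=\emptyset$ defined in (\ref{eq:Gammacellcond}), and that the Presburger notion of linear function coincides with the one stipulated before the theorem. Both identifications are immediate from the shape of the formulas and the definition of linearity.

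I do not anticipate a real obstacle: the only substantive work is Theorem \ref{thm:raf}, which is already assumed, and the pure Presburger statement, which is classical. If there is a subtle point at all, it is merely the bookkeeping that the parameters defining $Y$ and $g$ in $\cL_2$ are absorbed into $\LPres$-parameters from $\Gamma_K$ via stable embeddedness, so the resulting cells and the linear formulas are honestly $\cL_2$-definable with the same parameter set as $g$.
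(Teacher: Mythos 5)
Your proposal is correct and is exactly the paper's route: the paper states this result as a special case of \cite[Theorem 1]{clu-presb03}, obtained by first using Theorem \ref{thm:raf} to reduce to the pure Presburger setting, which is precisely the reduction you spell out. No issues.
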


Let us now define $K$-cells. A \emph{$K$-cell condition $C$ over $S$} is a formula of the form
\begin{equation}\label{eq:Kcellcond}\tag{E2}
C(s, c, t) \coloneqq   s\in S \text{ and } \alpha(s) \ \square_1 \ v(t-c) \ \square_2 \ \beta(s) \text{ and } t-c \in \lambda Q_{n,m}, 
\end{equation}
where $t$ and $c$ are variables over $K$, $\alpha, \beta$ are definable functions $S \to \Gamma_K$, squares $\square_1,\square_2$ may denote either $<$ or $\emptyset$, $\lambda \in K$ and $n,m \in \NN\backslash\{0\}$. The variable $c$ is called the \emph{center of $C$}. A $K$-cell condition $C$ is called a \emph{0-cell} condition, resp. a \emph{$1$-cell} condition if $\lambda =0$, resp. $\lambda \neq 0$. Again, if $S=\emptyset$ then $\alpha,\beta$ denote elements of $\Gamma_K$.  

To define $K$-cells we need the following additional notion. Let $C$ be a $K$-cell condition over $S$. Given a function $\sigma\colon  S\to K$, we let $C^\sigma$ denote the set
\[
C^\sigma\coloneqq  \{(s,t)\in S\times K : C(s,\sigma(s),t)\}.  
\]
For $\Sigma\subseteq S\times K$, we let  
$C^\Sigma$ denote the set
\[
C^{\Sigma}\coloneqq  \{(s,t) \in S\times K  :  (\exists c)(c \in \Sigma_s \text{ and } C(s,c,t))\ \}.
\]

A definable set $\Sigma \subseteq S \times K$ is called a \emph{multi-ball over $S$}, if for every $s\in S$ the fibre $\Sigma_s$ is the union of finitely many balls with the same radius. For an integer $\ell>0$, we say a multi-ball $\Sigma$ over $S$ is \emph{of order $\ell$}, if for every $s\in S$ the fibre $\Sigma_s$ is a union of $\ell$ disjoint balls (with the same radius).  

\begin{definition}\label{def:K-cell}  A \emph{classical $K$-cell over $S$} is a set of the form $C^\sigma$ with $C$ a $K$-cell condition over $S$ and $\sigma\colon S\to K$ a definable function. A \emph{clustered $K$-cell over $S$} is a set of the form $C^\Sigma$ where $\Sigma$ is a multi-ball over $S$ of order $\ell$ for some $\ell>0$. A \emph{$K$-cell over $S$} is either a classical or a clustered $K$-cell over $S$.  
\end{definition}

It is worthy to mention that the definition of clustered $K$-cell given in \cite{cham-cubi-leen} contains further properties which we omitted in Definition \ref{def:K-cell} as we will not need them in our arguments (see \cite[Definition 3.4]{cham-cubi-leen} for more details). We will only need two additional properties which we gather in the following remark. 

\begin{remark}\label{rem:classical-cells} Let $X\subseteq S\times K$ be a definable set and let $X_1,\ldots, X_d$ be a cell decomposition of $X$ over $S$. 
\begin{enumerate}
\item We may suppose that every classical $K$-cell $X_i$ over $S$ is defined by a cell condition $C(s,c(s),t)$ as in \eqref{eq:Kcellcond} such that $\square_2=\emptyset$. Indeed, when $\square_2$ is $<$, we can view $X_i$ as a clustered $K$-cell given by $C^\Sigma$ where $\Sigma$ is 
\[
\Sigma\coloneqq \{(s,y)\in S\times K : (\forall t) (C(s,c(s),t)\leftrightarrow C(s,y,t)). 
\]
which is a multi-ball of order 1. 
\item If $X_i$ is a clustered cell $C^\Sigma$ where $\Sigma$ is a multi-ball of order $\ell$ over $S$ and $C$ is a cell condition as in \eqref{eq:Kcellcond}, we may suppose that the function $\beta(s)$ is bounded by the radius of some (any) ball in $\Sigma_s$ (see also the explanation given \cite{cham-cubi-leen} after Definition 1.4). 
\end{enumerate}
\end{remark}


We can now rephrase Mourgues' main result in \cite{mou-09}, which shows in particular that in the absence of definable Skolem functions, classical cells are not enough to describe definable sets. We say that a (one sorted) $P$-minimal field $(K,\cL)$ has \emph{classical cell decomposition}, if for every integer $n\geqslant 1$, every definable set $X\subseteq K^n$ can be decomposed into finitely many classical $K$-cells. Recall that a structure $M$ has \emph{definable Skolem functions} if for every definable set $X\subseteq M^{n+1}$ there is a definable function $g\colon \pi(X)\to M$ such that $(x,g(x))\in X$ for all $x\in \pi(X)$, where $\pi$ denotes the projection of $M^{n+1}$ onto the first $n$ coordinates.

%
%

\begin{theorem}[Mourgues]\label{thm:mou} Let $(K,\cL)$ be a $P$-minimal field. Then the following are equivalent. 
\begin{enumerate}
\item $(K,\cL)$ has definable Skolem functions;
\item $(K,\cL)$ has classical cell decomposition. \qed
\end{enumerate}
\end{theorem}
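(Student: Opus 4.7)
The plan is to prove the two implications separately. The implication $(2) \Rightarrow (1)$ is the easy direction: every classical cell carries a canonical definable point, built from its center. The converse $(1) \Rightarrow (2)$ is the substantive content, and my approach is to exploit the clustered cell decomposition theorem of \cite{cham-cubi-leen}, which reduces the problem to turning a single clustered cell into a finite disjoint union of classical cells---exactly the sort of task that a definable Skolem function is designed to handle.

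For $(2) \Rightarrow (1)$, let $X \subseteq K^{n+1}$ be definable and apply classical cell decomposition over $S = K^n$ to write $X = \bigsqcup_{i=1}^{k} C_i^{\sigma_i}$, with each $\sigma_i \colon S_i \to K$ definable. By Remark~\ref{rem:classical-cells}(1) we may assume each cell condition has the form $\alpha_i(s) \,\square_{1,i}\, v(t - \sigma_i(s))$ together with $t - \sigma_i(s) \in \lambda_i Q_{n_i, m_i}$. Setting $g_i(s) = \sigma_i(s)$ when $\lambda_i = 0$, and $g_i(s) = \sigma_i(s) + \lambda_i$ when $\lambda_i \neq 0$, one checks that $(s, g_i(s)) \in C_i^{\sigma_i}$ whenever $s \in \pi(C_i^{\sigma_i})$: since $1 \in Q_{n_i,m_i}$ and $v(\lambda_i)$ is constant, the residual valuation condition depends only on $s$ and precisely characterises non-emptiness of the fibre. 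Gluing by first occurrence over $i$ then produces a definable Skolem function on $\pi(X)$.

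For $(1) \Rightarrow (2)$, assume $(K,\cL)$ has definable Skolem functions and let $X \subseteq K^n$ be definable. Apply the clustered cell decomposition theorem of \cite{cham-cubi-leen} to write $X$ as a finite disjoint union of classical and clustered $K$-cells; the task reduces to decomposing each clustered cell $C^{\Sigma}$, with $\Sigma$ a multi-ball of order $\ell$ over some parameter set $S$, into $\ell$ classical cells. I argue by induction on $\ell$. Using a definable Skolem function on $\Sigma \subseteq S \times K$, pick $\sigma \colon S \to K$ with $\sigma(s) \in \Sigma_s$. Writing $\rho(s)$ for the common radius of the balls of $\Sigma_s$, set $\Sigma^{(1)} = \{(s,y) \in \Sigma : v(y - \sigma(s)) \geq \rho(s)\}$; this is a definable sub-multi-ball of order $1$, while $\Sigma \setminus \Sigma^{(1)}$ is a multi-ball of order $\ell - 1$. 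The identity $C^{\Sigma} = C^{\Sigma^{(1)}} \sqcup C^{\Sigma \setminus \Sigma^{(1)}}$ together with the induction hypothesis applied to the second summand reduces the problem to identifying $C^{\Sigma^{(1)}}$ with the classical cell $C^{\sigma}$.

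The principal obstacle is precisely this identification $C^{\Sigma^{(1)}} = C^{\sigma}$: one must show that the cell condition $C(s,c,t)$ is insensitive to replacing an arbitrary center $c \in \Sigma^{(1)}_s$ by the Skolem-selected $\sigma(s)$. This would fail for the bare definition of clustered cell recorded in Definition~\ref{def:K-cell}, but holds under the additional conditions included in the full definition of \cite[Definition~3.4]{cham-cubi-leen}. The relevant facts are the radius bound of Remark~\ref{rem:classical-cells}(2), which forces $\beta(s)$ to be dominated by $\rho(s)$ and hence $v(t-c) < \rho(s) \leq v(c - \sigma(s))$, together with a comparison that ensures $\rho(s)$ exceeds $v(t-c)$ by at least $m$ units, so that both $v(t - c)$ and $\acm((t-c)/\lambda)$ are preserved under the change of center. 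Granting these built-in properties of clustered cells, the induction closes and produces the required classical cell decomposition of $X$.
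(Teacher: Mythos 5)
First, a caveat on the comparison: the paper offers no proof of Theorem \ref{thm:mou} at all --- it is quoted from Mourgues \cite{mou-09} and closed with a \qed --- so there is no in-paper argument to measure yours against. Your route for $(1)\Rightarrow(2)$, deducing classical cell decomposition from the clustered cell decomposition theorem (Theorem \ref{thm:celldecomp}) by using a Skolem function to split a multi-ball $\Sigma$ of order $\ell$ into a selected ball $\Sigma^{(1)}$ and a multi-ball of order $\ell-1$, is genuinely different from Mourgues' original argument, which predates \cite{cham-cubi-leen} and builds the cells directly from definable selection. Your reduction is the natural modern shortcut, but everything it actually needs --- the disjointness of $C^{\Sigma^{(1)}}$ and $C^{\Sigma\setminus\Sigma^{(1)}}$, and the identity $C^{\Sigma^{(1)}}=C^{\sigma}$, i.e.\ the invariance of the cell condition under moving the center within one ball of $\Sigma_s$ --- lives precisely in the parts of \cite[Definition 3.4]{cham-cubi-leen} that this paper deliberately omits from Definition \ref{def:K-cell}. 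Remark \ref{rem:classical-cells}(2) only yields $v(t-c)<\beta(s)\leqslant\rho(s)$, not the margin $v(t-c)+m\leqslant v(c-\sigma(s))$ required to preserve $\acm((t-c)/\lambda)$, so the key identification is an appeal to unstated properties rather than a proof. You flag this honestly, but it means the substantive direction is complete only modulo the full definition in \cite{cham-cubi-leen}.

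The direction $(2)\Rightarrow(1)$ contains a concrete error. For a $1$-cell the point $g_i(s)=\sigma_i(s)+\lambda_i$ does satisfy $g_i(s)-\sigma_i(s)\in\lambda_iQ_{n_i,m_i}$, but nothing forces it to satisfy the order constraints: the fibre over $s$ consists of points with $v(t-\sigma_i(s))\in v(\lambda_i)+n_i\ZZ$ subject to $\alpha_i(s)\ \square_{1,i}\ v(t-\sigma_i(s))\ \square_{2,i}\ \beta_i(s)$, and non-emptiness of the fibre means that \emph{some} value $v(\lambda_i)+n_ij$ lies in the prescribed range, not that $j=0$ works; so the residual condition at $v(\lambda_i)$ does not ``precisely characterise non-emptiness of the fibre''. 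The repair is standard: choose $j(s)$ definably (e.g.\ the minimal admissible one, using that $\Gamma_K$ is a $\ZZ$-group) and set $g_i(s)=\sigma_i(s)+\lambda_i\varpi_K^{n_ij(s)}$, noting $\varpi_K^{n_ij}\in Q_{n_i,m_i}$. Relatedly, your appeal to Remark \ref{rem:classical-cells}(1) to assume $\square_{2,i}=\emptyset$ is not available in this direction: that reduction converts the classical cell into a clustered one, discarding exactly the distinguished definable center you are about to exploit. Both defects are fixable, but as written the ``easy'' direction does not go through.
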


The main theorem of \cite{cham-cubi-leen} shows that clustered cells are enough to describe definable subsets of $P$-minimal fields without assuming the existence of Skolem functions. 

\begin{theorem}[Clustered cell decomposition] \label{thm:celldecomp} Let $(K,\cL_2)$ be a $P$-minimal field and $X\subseteq S\times T$ be a definable set where $T$ is either $K$ or $\Gamma_K$. Then $X$ can be decomposed into finitely many $T$-cells over $S$.  \qed
\end{theorem}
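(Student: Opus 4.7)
The strategy is to separate the two cases $T=\Gamma_K$ and $T=K$. The first is nearly immediate from Theorem \ref{thm:raf}; the second contains all of the difficulty.

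For $T=\Gamma_K$: given a definable $X\subseteq S\times \Gamma_K$, Theorem \ref{thm:raf} shows that the defining condition can be taken in the pure Presburger language on $\Gamma_K$, with the $S$-parameters entering only through definable maps $S\to\Gamma_K$. A parameterized version of Cluckers' Presburger cell decomposition \cite[Theorem 1]{clu-presb03} then yields the desired partition into $\Gamma_K$-cells of the form \eqref{eq:Gammacellcond} over $S$.

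For $T=K$: by $P$-minimality each fibre $X_s\subseteq K$ is $\Lring$-definable in $K$, so Denef's classical cell decomposition expresses it as a finite union of classical $K$-cells. A standard compactness argument produces finitely many cell-condition formulas $C_1,\dots,C_N$ of the shape \eqref{eq:Kcellcond} and a definable partition of $S$ such that, on each piece and for every $s$ in it, one has $X_s=\bigcup_j\{t\in K:C_{i_j}(s,c_j(s),t)\}$ for some centers $c_j(s)\in K$. In the absence of definable Skolem functions the assignments $s\mapsto c_j(s)$ need not be definable, which is the central obstacle.

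The remedy is to replace each non-definable center by the definable set of all admissible ones. For every $C_i$ arising above, set
\[
\Sigma_i\coloneqq \{(s,c)\in S\times K : \{t\in K:C_i(s,c,t)\}\text{ is one of the pieces of the decomposition of }X_s\},
\]
which is definable. For a $1$-cell condition $C_i$, two centers $c,c'$ yield the same set iff $v(c-c')\geqslant \beta(s)$, so each fibre $\Sigma_{i,s}$ is a finite union of balls of common radius $\beta(s)$; the $0$-cell case is analogous with singletons. The last and most delicate step is to ensure that the number of balls in $\Sigma_{i,s}$ is uniformly bounded and eventually constant on $S$: boundedness follows from the compactness-based uniformity used above, while constancy is arranged by further partitioning $S$ along the definable function recording this count, invoking Theorem \ref{thm:raf} one more time to keep that partition definable. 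On each resulting piece $\Sigma_i$ is a multi-ball of constant order $\ell_i$, giving the sought decomposition into clustered $K$-cells $C_i^{\Sigma_i}$, with the case $\ell_i=1$ (together with Remark \ref{rem:classical-cells}(1)) corresponding to classical cells.
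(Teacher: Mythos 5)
First, a point of comparison: the paper does not prove Theorem \ref{thm:celldecomp} at all. It is stated with a \qed as the main theorem of \cite{cham-cubi-leen}, so there is no internal argument to match your proposal against; what you have sketched is an outline of a result whose proof is essentially the entire content of that reference. Your outline does identify the correct guiding idea --- without definable Skolem functions the centers cannot be chosen definably, so one replaces a center by the definable multi-ball of all admissible centers --- and the $T=\Gamma_K$ case via Theorem \ref{thm:raf} together with a parameterized Presburger cell decomposition is essentially right (modulo the routine compactness argument needed to make the stable embeddedness uniform in $s$).

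The genuine gap is the step where you declare $\Sigma_i$ to be definable. The condition ``$\{t\in K:C_i(s,c,t)\}$ is one of the pieces of the decomposition of $X_s$'' is not a first-order property of the pair $(s,c)$, because there is no canonical decomposition of $X_s$ varying with $s$: the fibrewise Denef decomposition you invoke depends on a choice of $\Lring$-parameters presenting $X_s$, and that choice is precisely what cannot be made definably in $s$; different choices produce genuinely different decompositions with different cell conditions and different pieces. To repair this one must characterize admissible centers by an intrinsic, uniformly first-order property of $(X_s,c)$ and then prove that the resulting clustered cells are pairwise disjoint and exhaust $X$ --- this is where the real work of \cite{cham-cubi-leen} lies, and it is not a routine compactness argument. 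Two further inaccuracies sit in the same step: two centers of a $1$-cell condition with upper bound $\beta(s)$ define the same set only when $v(c-c')$ exceeds roughly $\beta(s)+m$ (the angular component condition must also be preserved, and when $\square_2=\emptyset$ the admissible centers need not form a ball of radius $\beta(s)$ at all); and the uniform bound on the order $\ell_i$ of $\Sigma_{i,s}$ does not follow from the compactness argument as stated, since that argument bounds the number of pieces of \emph{some} decomposition of each fibre, not the number of balls of admissible centers attached to a fixed cell condition.
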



Let us now define what classical cell preparation is. Let $C^\sigma$ be a classical $K$-cell over $S$ and $f\colon C^\sigma\to K$ be a definable function. Suppose $C$ is a $K$-cell condition over $S$ as given by the formula in (\ref{eq:Kcellcond}). We say that $f$ \emph{is prepared} if there are an integer $k$ and a definable function $\delta\colon  S \to K$ such that for each $(s, t) \in C^\sigma$

\begin{equation*}
v(f(s, t)) = v(\delta(s))+\frac{k v(t - \sigma(s))+v(\lambda^{-k})}{n}. 
\end{equation*}
When $S=\emptyset$, $\delta$ is assumed to be a single element of $K$ and if $\lambda = 0$, we use as a convention that $k = 0$ and $0^0 = 1$.  

\begin{definition}\label{def:cellprepa} The structure $(K,\cL)$ has \emph{classical cell preparation} if given definable functions $f_j\colon X\subseteq K^n\to K$ for $j=1,\ldots,r$, there exists a finite partition of $X$ into classical $K$-cells $C$ over $K^{n-1}$ such that each function $f_j|C$ is prepared and continuous for each $K$-cell $C$. 
\end{definition} 

Any structure $(K,\cL)$ having classical cell preparation also has classical cell decomposition. Classical cell preparation for $p$-adically closed fields $(K,\Lring)$ was proved by Denef in his foundational article \cite{denef-86}. It was later extended by Cluckers for the sub-analytic language $(K,\cL_{an})$ in \cite{clu-2003} (see \cite{clu-2003} or \cite{cubi-nguyen} for a definition). His result is slightly stronger as he shows moreover that prepared functions may be chosen to be not only continuous but even analytic (and analogously for centers). 
  
We can now formally state the result of Darni\`ere and Halupczok from \cite{darniereETAL:2015} quoted in the introduction as follows (see more precisely \cite[Theorems 1.3 and 5.3 ]{darniereETAL:2015}).  

\begin{theorem}[Darni\`ere-Halupczok]\label{thm:DH} Let $(K,\cL)$ be a $P$-minimal field. The following are equivalent:
\begin{enumerate}
\item $K$ has definable Skolem functions and satisfies the extreme value property;  
\item $K$ has classical cell preparation. \qed
\end{enumerate}
\end{theorem}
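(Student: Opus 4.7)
The plan is to prove the two directions of the equivalence separately. Direction (2) $\Rightarrow$ (1) is the easier one, while (1) $\Rightarrow$ (2) carries the technical weight.

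For (2) $\Rightarrow$ (1), I would first observe that classical cell preparation immediately yields classical cell decomposition: applying Definition \ref{def:cellprepa} to the empty family of functions (or to any constant function) retains just the underlying partition of $X$ into classical cells. Theorem \ref{thm:mou} then produces definable Skolem functions. For the extreme value property, let $f\colon U\to \Gamma_K$ be interpretable and continuous with $U\subseteq K$ closed and bounded. The core idea is that $f(U)$ is an interpretable subset of $\Gamma_K$ which, using the stable embedding statement of Theorem \ref{thm:semialgpres} (extended to imaginaries), reduces to an $\LPres$-definable subset of the value group. A bounded $\LPres$-definable subset of $\Gamma_K$ automatically admits a maximum (cf. Remark \ref{pubdd}), so it suffices to bound $f(U)$ from above. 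This bound is obtained by applying classical cell preparation to definable witnesses $\tilde f$ of $f$ (available because on each classical cell of $U$ a definable function realising $f$ can be extracted via the prepared formula), and exploiting that on each closed bounded classical cell with bounded parameters the prepared form forces a bounded valuation.

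For (1) $\Rightarrow$ (2), Theorem \ref{thm:mou} already yields classical cell decomposition from definable Skolem functions. Given definable functions $f_1,\dots, f_r\colon X\to K$, the plan is as follows. First, apply clustered cell decomposition (Theorem \ref{thm:celldecomp}) to an auxiliary set encoding the graphs of $v\circ f_j$ together with angular components $\acm(f_j)$; this produces a partition into cells on which each $v(f_j)$ depends in a piecewise linear way on $v(t-c)$ for centers $c$ drawn from the associated multi-balls (via Theorem \ref{thm:raf-function} applied fibrewise). Second, use definable Skolem functions to select one ball from each multi-ball in every clustered piece, converting it into finitely many classical cells (one per chosen ball, with $\sigma(s)$ the center of the selected ball). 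Third, on each resulting classical cell one extracts the prepared formula
\[
v(f_j(s,t)) = v(\delta_j(s)) + \frac{k_j\,v(t-\sigma(s)) + v(\lambda^{-k_j})}{n}
\]
for appropriate definable $\delta_j$ and integer parameters.

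The principal obstacle is the \emph{continuity} requirement: the center $\sigma$ and the auxiliary functions $\delta_j$ produced above must be continuous on each final cell, and this is where the extreme value property is indispensable. The strategy is inductive on dimension. On a cell $C$ where $\delta_j$ (or $\sigma$) fails to be continuous, one encodes the defect of continuity by an interpretable continuous function $h\colon C'\to \Gamma_K$ on a closed and bounded slice $C'$ of $C$, measuring, for instance, the valuation distance from a discontinuity locus to the nearest point where the prepared formula is locally valid. By the extreme value property, $h$ attains a maximum; this maximum is used to carve off a definable locus of strictly smaller dimension along which the discontinuity concentrates, on which one recurses, while on the complement the preparation becomes continuous. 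Without the extreme value property, the supremum of defects might not be attained and the refinement would fail to terminate in finitely many steps. Iterating this procedure, combined with the classical cell decomposition coming from Skolem functions, yields the desired classical cell preparation.
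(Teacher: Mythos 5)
First, a point of comparison: the paper does not prove this statement at all --- it is quoted from Darni\`ere and Halupczok (\cite[Theorems 1.3 and 5.3]{darniereETAL:2015}) and marked as an external result, so the only question is whether your argument stands on its own. Your direction (2) $\Rightarrow$ (1) is essentially workable: preparation gives decomposition, hence Skolem functions by Theorem \ref{thm:mou}, and Skolem functions let you lift an interpretable $f\colon U\to\Gamma_K$ to a definable $\tilde f\colon U\to K$ with $v\circ\tilde f=f$, to which preparation applies. One caveat: on a bounded closed cell the prepared form alone does \emph{not} force $v(\tilde f)$ to be bounded above --- if the exponent $k$ is positive and the center $\sigma$ lies in the closure of the cell, $v(\tilde f(t))\to+\infty$ as $t\to\sigma$; you must use continuity of $f$ at $\sigma$ (which forces $f$ to be locally constant there) to exclude this, and your phrase about the prepared form ``forcing a bounded valuation'' elides exactly this point.

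The genuine gap is in (1) $\Rightarrow$ (2). In Darni\`ere--Halupczok the extreme value property enters through a precise intermediate statement, relative $P$-minimality (their Theorem 4.1, quoted in this paper as Theorem \ref{thm:EVPrelPmin}): EVP implies that interpretable subsets of $\Gamma_K^d\times K$ are already $\Lring$-definable, and it is this that gives control over the interpretable data $v\circ f_j$ and $\acm\circ f_j$ needed to run a Denef-style preparation. Your sketch establishes nothing of the sort; indeed your very first step, applying clustered cell decomposition to ``an auxiliary set encoding the graphs of $v\circ f_j$,'' already presupposes the kind of control over interpretable sets mixing $K$ and $\Gamma_K$ that EVP is needed to provide. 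Instead you assign EVP the entirely different role of bounding a ``defect of continuity'' function $h$, but $h$ is never actually defined, its continuity (a hypothesis of EVP) is not addressed, and the claim that attaining its maximum lets you ``carve off a definable locus of strictly smaller dimension'' is asserted rather than argued; the remark that without EVP the refinement ``might not terminate'' is a heuristic, not a proof. As written, this direction is a plausible narrative rather than an argument, and it does not reconstruct the mechanism by which the extreme value property is actually used.
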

%
%
%


We will further need the following result, which corresponds to \cite[Lemma 3.2]{cham-cubi-leen_II}.
 
\begin{lemma}\label{lem:finiteimage} Let $K$ be a $P$-minimal field and $f\colon\Gamma_K\to K$ be a definable function. Then $f$ has finite image. \qed
\end{lemma}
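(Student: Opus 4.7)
The plan is to argue by contradiction: suppose $f\colon\Gamma_K\to K$ is definable with infinite image. I would attack the statement in two stages: first reduce to the case where $v\circ f$ is constant, and then contradict this by exploiting the finiteness of the residue rings via the angular components.

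\emph{Stage 1 (making $v\circ f$ constant).} Consider the definable function $g\coloneqq v\circ f$ on $\Gamma_K\setminus f^{-1}(0)$. By Theorem~\ref{thm:raf-function}, $g$ is piecewise linear on finitely many $\Gamma_K$-cells. On any cell $Y$ where $g$ is linear with nonzero slope, $g|_Y$ is injective, hence so is $f|_Y$, and in particular the elements of the image $f(Y)\subseteq K$ have pairwise distinct valuations. But $f(Y)$ is $\cL_2$-definable in $K$, so $\Lring$-definable by $P$-minimality; if $f(Y)$ were infinite, classical cell decomposition for $(K,\Lring)$ would force it to contain an open ball $B$, and any open ball in a $p$-adically closed field contains infinitely many elements of one common valuation, contradicting the $v$-injectivity of $f(Y)$. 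Hence every such cell is finite, and restricting $f$ to the union of cells where $g$ is constant we may assume $v\circ f=\rho$ on an infinite Presburger subset of $\Gamma_K$ on which $f$ still has infinite image.

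\emph{Stage 2 (ruling out constant $v\circ f$).} Pick any $x_0\in K$ with $v(x_0)=\rho$ and set $u\coloneqq f/x_0$, a definable map $u\colon\Gamma_K\to\cO_K^\times$ with $v\circ u\equiv 0$ and still infinite image. For each integer $m$ the map $\acm\circ u$ takes values in the finite set $(\cO_K/\varpi_K^m\cO_K)^\times$, so some fibre $Y_m\subseteq\Gamma_K$ is infinite and carries an infinite $u$-image; on $Y_m$ one may write $u(\gamma)=\bar u_m+\varpi_K^m\epsilon_m(\gamma)$ for some definable $\epsilon_m\colon Y_m\to\cO_K$ with infinite image. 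The plan is to iterate Stage~1 on $\epsilon_m$ together with the above rescaling, producing a nested sequence of definable Presburger pieces on which $f$ takes values in balls of arbitrarily small radius about a common centre; combining this nesting with $P$-minimality (a Hensel-type argument inside $K$ collapses the chain to a single point) would then contradict the infinite image of $f$.

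The main obstacle will be executing this iteration strictly within first-order logic: a naive nested construction cannot be realised in a single formula, so the proof must proceed either by a bounded-complexity argument forcing termination after finitely many refinements, or by a saturation/compactness argument realising a ``limit type'' on which $f$ is uniquely determined. This is the most delicate step, and I expect it to be carried out using only the tools available before the present paper, without appealing to Theorem~\ref{thm:A}.
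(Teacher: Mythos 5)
First, a point of reference: the paper does not prove this lemma at all; it is imported verbatim from \cite{cham-cubi-leen_II} (Lemma 3.2 there), so there is no in-paper argument to measure yours against. Your Stage 1 is correct and is a sensible first reduction: on the $\Gamma_K$-cells where $v\circ f$ is linear with nonzero slope the image is $v$-injective, and since an infinite $\Lring$-definable subset of $K$ contains a ball (hence infinitely many points of a common valuation), those pieces contribute only finitely many values; so one may indeed assume $v\circ f$ is constant on an infinite Presburger cell on which $f$ still has infinite image.

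The genuine gap is Stage 2, and it is not merely the difficulty you flag of "executing the iteration in first-order logic": the iteration cannot produce a contradiction even in principle. All your zooming extracts is a strictly decreasing chain of balls, each containing infinitely many values of $u$. But that is exactly what any infinite definable subset of $K$ looks like: being $\Lring$-definable, $u(Y)$ contains a ball, and inside a ball one can refine forever. There is no termination mechanism, and the proposed "Hensel-type collapse" has no content here: Henselianity lifts roots of polynomials and says nothing about nested chains of balls having a common point (in a general $p$-adically closed field they need not), and even a common point would not contradict $f$ having infinite image. In short, Stage 2 only interrogates $f(\Gamma_K)$ as a subset of $K$, and every infinite definable subset of $K$ passes every test you impose; the obstruction has to come from the interaction between the two sorts. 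A way to close the argument: after Stage 1, use definable choice in the Presburger sort (every nonempty definable subset of $\Gamma_K$ bounded below has a least element, so one can definably select a point of each fibre $f^{-1}(f(\gamma))$) to make $f$ injective on a definable $Y$ with infinite image; then $f(Y)$ contains a ball $B$ and $f^{-1}|_B$ is a definable injection of $B$ into $\Gamma_K$; finally pass to an elementary extension in which balls have strictly more elements than the value group (e.g.\ a spherically complete model whose value group has cardinality and cofinality $\aleph_1$, so that every ball has at least $2^{\aleph_1}$ points), turning the injection into a cardinality contradiction. This kind of counting-in-a-large-model step is of the same flavour as Case 2 of the paper's proof of Theorem (A); without some such global input your proposal does not close.
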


%
%

\subsection{The meet-semi lattice tree of closed balls}\label{sec:tree}

Let $T(K)$ denote the set of closed balls of $K$ with radius in $\Gamma_K\cup\{\infty\}$. Ordered by inclusion, $T(K)$ is a meet semi-lattice tree. We let $x\wedge y$ denote the meet of two elements $x,y\in T(K)$. Given $x\in T(K)$, we let $B(x)$ be the set of elements of $K$ in the closed ball associated with $x$. We let $\rad\colon T(K)\to\Gamma_K\cup\{\infty\}$ denote the radius function, namely, the function sending a point $x\in T(K)$ corresponding to the closed ball $B_\gamma(a)$ to $\gamma$. We will often identify points of $K$ with leaves of $T(K)$ (i.e., those $x\in T(K)$ such that $\rad(x)=\infty$). 

For $a\in K$, the \emph{branch of $a$ in $T(K)$}, in symbols $\Br(a)$, is the set of $x\in T(K)$ such that $a\in B(x)$. Every branch of $T(K)$ with cofinal radii (i.e., a linearly ordered subset $H$ of $T(K)$, maximal with respect to inclusion and such that $\{\rad(x) : x\in H\}$ is cofinal in $\Gamma_K$) can be identified with (the branch of) an element $b$ in the completion $\widehat{K}$ of $K$. We thus extend the notation and write $\Br(b)$ for the branch in $T(K)$ of $b\in \widehat{K}$. 

Note that $T(K)$, $\wedge$ and $\rad$ are interpretable (without parameters) in any valued field. Abusing of terminology, we will speak about definable subsets of $T(K)$ instead of saying ``interpretable subsets''.

%

We finish this section with two slightly technical lemmas. 

\begin{lemma}\label{le lemme} Let $I\subseteq \Gamma_K$ be a cofinal subset which is in addition well-ordered. Let $(x_\gamma)_{\gamma\in I}$ be a sequence of elements in $T(K)$ such that for every $c\in K$, there is $\varepsilon_c\in \Gamma_K$ such that the function $f_c\colon I_{>\varepsilon_c}\to \Gamma_K$ given by $\gamma\mapsto \rad(c\wedge x_\gamma)$ is the trace on $I_{>\varepsilon_c}$ of a definable function on $\Gamma_K$. Then, there is a cofinal subset $I'\subseteq I$ such that one of the following holds: 
\begin{enumerate}
\item $(x_\gamma)_{\gamma\in I'}$ is constant;
\item $f_0|I'$ is strictly decreasing;
\item the set 
\[
J\coloneqq\{\gamma\in I  :  (\forall\delta\in I_{>\gamma})(\exists \gamma'\in I_{>\delta})(\exists \gamma''\in I_{>\gamma'})(
x_\gamma \wedge x_{\gamma'} = x_\gamma \wedge x_{\gamma''}
                        				     < x_{\gamma'} \wedge x_{\gamma''})\}
\]
is cofinal in $\Gamma_K$. 
\end{enumerate}
\end{lemma}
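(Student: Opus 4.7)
The argument is a case analysis based on Cluckers' piecewise-linearity theorem (Theorem~\ref{thm:raf-function}) applied to $f_0$, combined with pigeonhole arguments exploiting the finiteness of the residue field $k_K$.

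If $(x_\gamma)_{\gamma \in I}$ takes only finitely many distinct values on some cofinal subset of $I$, pigeonhole yields a cofinal $I' \subseteq I$ on which $(x_\gamma)$ is constant, placing us in case~(1). We therefore assume $(x_\gamma)$ takes infinitely many values on every cofinal subset. By hypothesis, $f_0$ agrees on a cofinal tail of $I$ with a definable function $\Gamma_K \to \Gamma_K$, which by Theorem~\ref{thm:raf-function} is piecewise linear. Passing to a cofinal $I_0 \subseteq I$, we may assume $f_0|I_0$ is the restriction of a single linear Presburger function of slope $a/n$; we split on the sign of $a$.

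If $a < 0$, then $f_0|I_0$ is strictly decreasing and we are in case~(2). If $a > 0$, then $f_0(\gamma) \to \infty$, so the balls $x_\gamma \wedge 0$ strictly shrink towards $0$. Fix $\gamma \in I_0$ and set $B := x_\gamma \wedge 0$. By the very definition of $B$, either $x_\gamma = B$ (when $0 \in x_\gamma$) or $x_\gamma$ lies in a direct sub-ball of $B$ distinct from the one containing $0$. On the other hand, for any $\gamma' \in I_0$ with $f_0(\gamma') > \rad(B)$ we have $x_{\gamma'} \wedge 0 \subsetneq B$, so $x_{\gamma'}$ is contained in the direct sub-ball of $B$ containing $0$. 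Picking any two such $\gamma', \gamma'' > \delta$, one verifies $x_\gamma \wedge x_{\gamma'} = x_\gamma \wedge x_{\gamma''} = B$ while $x_{\gamma'} \wedge x_{\gamma''} \subsetneq B$. Hence $\gamma \in J$ for every $\gamma \in I_0$, giving case~(3).

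If $a = 0$, then $f_0|I_0 \equiv \eta_0$ and every $x_\gamma$ is contained in $B_* := B_{\eta_0}(0)$ but not in the direct sub-ball of $B_*$ containing $0$. Since $|k_K|<\infty$, every ball has only finitely many direct sub-balls. We iterate the following dichotomy: given a current ball $B^{(n)}$ and cofinal $I_n \subseteq I_0$ with $x_\gamma \subseteq B^{(n)}$ for $\gamma \in I_n$, either (i) two different direct sub-balls of $B^{(n)}$ each capture cofinally many $x_\gamma$'s --- in which case an argument parallel to the $a > 0$ case shows $I_n \subseteq J$ and yields case~(3) --- or (ii) a single direct sub-ball captures cofinally many $x_\gamma$'s and we descend to $B^{(n+1)} \subsetneq B^{(n)}$. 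Since $(x_\gamma)$ is not cofinally constant, the iteration either branches at some finite level (case~(3)) or produces an infinite strictly descending chain $B_* \supsetneq B^{(1)} \supsetneq B^{(2)} \supsetneq \cdots$ with cofinal $I_n$'s. In this latter situation, for any $\gamma \in I_0$ there is a maximal $m$ with $x_\gamma \subseteq B^{(m)}$ (since the radii $\rad(B^{(n)})$ grow without bound while $\rad(x_\gamma)$ is fixed), and choosing $\gamma', \gamma'' > \delta$ in $I_{m+1}$ the same tree computation gives $\gamma \in J$. The most delicate ingredient is precisely this zero-slope sub-case: the descent need not have a limit in $K$, so one cannot reduce to the positive-slope case by applying the hypothesis to $f_{x^*}$ for a limit $x^* \in K$; the argument must proceed entirely within $T(K)$, using finiteness of $k_K$ and the descending chain alone to produce the branching witnesses defining $J$.
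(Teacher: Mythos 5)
Your handling of the strictly increasing and strictly decreasing slopes of $f_0$ is correct and fleshes out what the paper leaves implicit, but the constant-slope case has a genuine gap, and it sits exactly where you deviate from the lemma's hypothesis: you only ever invoke the assumption for $c=0$, whereas the statement supplies a definable $f_c$ for \emph{every} $c\in K$, and that extra strength is needed. In your descent the ball $B^{(n)}$ has radius $\eta_0+n$ with $n$ a standard natural number, and you conclude that each $x_\gamma$ leaves the chain at some finite level because ``the radii $\rad(B^{(n)})$ grow without bound while $\rad(x_\gamma)$ is fixed''. That is only true when $\Gamma_K=\ZZ$. For a general $P$-minimal field $\Gamma_K$ is merely a $\ZZ$-group, the set $\{\eta_0+n : n\in\NN\}$ is bounded above by $\eta_0+N$ for any nonstandard $N$, and nothing prevents all the $x_\gamma$ from lying inside a single ball $B'$ with $\rad(B')-\eta_0$ nonstandard and $0\notin B'$: then $f_0\equiv\eta_0$, the sequence can take infinitely many values inside $B'$, the descent follows the branch of $B'$ and never branches at any finite level, no maximal $m$ exists for any $\gamma$, and your argument produces no witnesses for $J$ even though the sequence is nowhere eventually constant. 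The nonstandard situation is not a corner case here; in the application (Case 2 of the proof of Theorem (A)) the lemma is used after passing to a large elementary extension.

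The repair is not a limit point of the descent (you are right that none need exist) but simply a point $c\in B(x_{\gamma_0})$ for an actual term $x_{\gamma_0}$ of the sequence: such a $c$ always exists, and the hypothesis applied to this $c$ is exactly what the paper uses. Its proof first reduces, via Theorem \ref{thm:raf-function} as you do, to the case where every $f_c$ is eventually constant (increasing gives (3), decreasing gives (2) since $f_c$ eventually agrees with $f_0$), and then shows that if $J$ is not cofinal, constancy of $f_c$ for $c\in B(x_\gamma)$ forces $x_\gamma\wedge x_{\gamma'}=x_\gamma\wedge x_{\gamma''}=x_{\gamma'}\wedge x_{\gamma''}$ for all large $\gamma'<\gamma''$, whereupon finiteness of the residue field yields eventual constancy. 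This single application of the hypothesis at a well-chosen $c$ replaces your entire $\omega$-step descent and is insensitive to the nonstandardness of $\Gamma_K$.
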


\begin{proof}
For $c\in K$, since $f_c$ is the trace of a definable function, and $I_{>\varepsilon_c}$ is cofinal in $\Gamma_K$, by Theorem \ref{thm:raf-function}, there is a cofinal subset $I'$ of $I$ such that $f_c$ restricted to $I'$ is linear and hence either strictly increasing, strictly decreasing or constant. If $f_c|I'$ is strictly increasing, then (3) would hold.  If $f_c|I'$ is strictly decreasing, then for large enough $\gamma$ we have that $f_c(\gamma)=f_0(\gamma)$, and (2) would hold. Therefore, possibly taking a larger $\varepsilon_c$, we may assume that the function $f_c$ is constant on $I_{>\varepsilon_c}$ for every $c\in K$. Assuming (3) does not hold, let $\gamma\in I$ be such that $I_{\geqslant \gamma}\cap J=\emptyset$. Thus, there is $\delta\in I_{>\gamma}$ such that for every $\gamma',\gamma''\in I$ with $\delta<\gamma'<\gamma''$ either 
\[
x_\gamma \wedge x_{\gamma'} \neq x_\gamma \wedge x_{\gamma''} \text{ or } x_\gamma \wedge x_{\gamma''} \geqslant x_{\gamma'} \wedge x_{\gamma''}. 
\]
Pick any $c\in B(x_\gamma)$. Since $f_c$ is constant on $I_{>\varepsilon_c}$, given $\gamma',\gamma''\in I_{>\varepsilon_c}$ we must have that $x_\gamma \wedge x_{\gamma'} = x_\gamma \wedge x_{\gamma''}$. Therefore, if $m\coloneqq\max\{\delta,\varepsilon_c\}<\gamma'<\gamma''$, then 
\[
x_\gamma \wedge x_{\gamma'} = x_\gamma \wedge x_{\gamma''}= x_{\gamma'} \wedge x_{\gamma''}.
\]
Since the residue field is finite, this can only occur if (1) holds for $I'=I_{>m}$. 
\end{proof}

\begin{lemma}\label{lem:downwards} Let $A\subseteq \Gamma_K\times (T(K)\setminus K)$ be a definable set and let $Y$ be its projection to the $\Gamma_K$-coordinate. Assume that
\begin{enumerate}
\item $Y$ is bounded below and cofinal in $\Gamma_K$;
\item there is a positive integer $\ell$ such that $A_\gamma$ has cardinality $\ell$ for each $\gamma\in Y$; 
\item given $\gamma\in Y$, $\rad(x)=\rad(y)$ and $\rad(x\wedge 0)=\rad(y\wedge 0)$ for all $x,y\in A_\gamma$; 
\item the function $g\colon Y\to\Gamma_K$ given by $\gamma\mapsto \rad(x)$ for some (any) $x\in A_\gamma$ is monotone increasing. 
\end{enumerate}
Then, the image of the function $h\colon Y\to \Gamma_K$ given by $\gamma\mapsto \rad(x\wedge 0)$ for some (any) $x\in A_\gamma$, is bounded below. 
\end{lemma}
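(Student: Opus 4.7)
The plan is to argue by contradiction: assume the image of $h$ is unbounded below in $\Gamma_K$ and derive a contradiction.

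First, since $h\colon Y\to\Gamma_K$ is definable and $\Gamma_K$ carries the structure of a pure $\ZZ$-group by Theorem \ref{thm:raf}, Theorem \ref{thm:raf-function} will partition $Y$ into finitely many $\Gamma_K$-cells on each of which $h$ is linear. Cofinality of $Y$ and unboundedness of $h$ below will force some cofinal cell $Y'\subseteq Y$ on which $h|Y'$ is strictly decreasing, so after replacing $Y$ by $Y'$ we may assume $h$ is strictly decreasing on $Y$. Combined with hypothesis (4), this gives $g(\gamma)-h(\gamma)\to+\infty$. Note also that $Y$ is well-ordered, being bounded below in a $\ZZ$-group.

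Next, I would apply Lemma \ref{le lemme}. Set $\phi\colon Y\to T(K)\setminus K$ by $\phi(\gamma):=\bigwedge_{x\in A_\gamma}x$, the meet in the tree of the $\ell$ balls in $A_\gamma$. Hypothesis (3) forces $x\wedge 0=B_{h(\gamma)}(0)$ for every $x\in A_\gamma$, and hence $\phi(\gamma)\wedge 0=B_{h(\gamma)}(0)$, giving $\rad(\phi(\gamma)\wedge 0)=h(\gamma)$. For every $c\in K$, the definable function $\gamma\mapsto\rad(c\wedge\phi(\gamma))$ agrees on some cofinal subset of $Y$ with the restriction of a linear function on $\Gamma_K$ by Theorem \ref{thm:raf-function}, so the hypotheses of Lemma \ref{le lemme} are satisfied. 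Its conclusion produces a cofinal $I'\subseteq Y$ on which one of three alternatives holds; alternative (1), that $\phi$ is constant on $I'$, forces $h|I'$ to be constant, immediately contradicting the strict decrease of $h$. Only alternatives (2) and (3) remain.

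To deal with these, I would turn to cell decomposition. Consider the definable set
\[
\tilde{A}:=\bigl\{(\gamma,y)\in Y\times K : y\in B(x)\text{ for some }x\in A_\gamma\bigr\},
\]
whose fibre $\tilde{A}_\gamma$ is a union of $\ell$ disjoint balls of radius $g(\gamma)$, and for $\gamma$ large is contained in the annulus $\{v=h(\gamma)\}$. Theorem \ref{thm:celldecomp} expresses $\tilde{A}$ as a finite union of classical and clustered $K$-cells over $Y$. By Remark \ref{rem:classical-cells}(1) we may take each classical cell to have $\square_2=\emptyset$, and Lemma \ref{lem:finiteimage} then forces its center function $\sigma\colon Y\to K$ to have finite image. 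A direct inspection of the cell condition \eqref{eq:Kcellcond} shows that such a classical cell's fibre spreads over infinitely many valuation levels $v(t-\sigma(\gamma))\in v(\lambda)+n\ZZ$ above $\alpha(\gamma)$, so it cannot be contained in $\tilde{A}_\gamma\subseteq\{v=h(\gamma)\}$ for cofinally many $\gamma$; thus $\tilde{A}$ must be covered entirely by clustered cells. Finally, an analysis of those clustered cells, invoking Remark \ref{rem:classical-cells}(2) to bound their upper parameter $\beta$ by the radius of the underlying multi-ball and using the fixed fibre cardinality $\ell$ together with the divergence $g(\gamma)-h(\gamma)\to\infty$, should yield the sought contradiction.

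The main obstacle will be this last step: while Lemma \ref{lem:finiteimage} controls classical cells cleanly through the finiteness of their centers, clustered cells carry a more flexible multi-ball structure, and the argument has to combine the order constraint $|A_\gamma|=\ell$ with the strict decrease of $h$ and the monotonicity of $g$ to close the loop.
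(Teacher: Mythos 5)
Your opening reduction (assume $h(Y)$ is unbounded below and use Theorem \ref{thm:raf-function} to pass to a cofinal subset of $Y$ on which $h$ is linear and strictly decreasing) matches the paper's proof. After that there is a genuine gap, and it sits exactly where you flag it. The detour through Lemma \ref{le lemme} buys nothing: alternative (1) is excluded, but alternatives (2) and (3) are never used or refuted, so you are back where you started. More importantly, decomposing the two-variable set $\tilde{A}\subseteq Y\times K$ into $K$-cells \emph{over $Y$} cannot produce a contradiction: $\tilde{A}$ is itself (essentially) a single clustered $K$-cell over $Y$ with multi-ball of order $\ell$, so Theorem \ref{thm:celldecomp} describes it perfectly and imposes no constraint incompatible with $h$ being strictly decreasing. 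Eliminating the classical cells via Lemma \ref{lem:finiteimage} only pushes everything into the clustered cells, where, as you concede, the argument does not close.

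The missing idea is to drop the parametrization and work in one variable. The paper considers the union $W:=\bigcup_{\gamma\in Y}\bigcup_{x\in A_\gamma}B(x)\subseteq K$, which by $P$-minimality is $\Lring$-definable and hence, by classical cell decomposition over the \emph{empty} parameter set, a finite union of cells $D_i$ with constant centers $a_i$, constant bounds $\alpha_i,\beta_i$, and coset conditions $x-a_i\in\lambda_i Q_{n_i,m_i}$. For $v(x)$ far below all the $v(a_i),\alpha_i,\beta_i$, membership in $W$ depends only on whether $x\in\lambda_i Q_{n_i,m_i}$ for some $i$ with $\square_{1,i}=\emptyset$; consequently the trace of $W$ on the sphere $\{v=h(\gamma)\}$ is a union of balls of radius at most $h(\gamma)+m$, where $m=\max_i m_i$. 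On the other hand, by construction this trace is exactly $\ell$ balls of radius $g(\gamma)\geq g(\gamma_0)$, and since $h(\gamma)\to-\infty$ while $g$ is increasing, $\ell$ balls of radius strictly greater than $h(\gamma)+m$ cannot fill out a nonempty union of balls of radius $h(\gamma)+m$. This rigidity of one-variable $\Lring$-definable sets near valuation $-\infty$ is the engine of the paper's proof, and your proposal never accesses it. (A small additional slip: a subset of $\Gamma_K$ bounded below need not be well-ordered when $\Gamma_K$ is a nonstandard $\ZZ$-group; Lemma \ref{le lemme} asks for a cofinal well-ordered subset $I$, which exists but is in general a proper, non-definable subset of $Y$.)
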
 
\begin{proof}
Suppose for a contradiction that $h(Y)$ is unbounded below. By Theorem \ref{thm:raf-function}, possibly replacing $Y$ by a cofinal subset, we may suppose that $h$ is linear and strictly decreasing. Consider the definable subset of $K$ 
\[
W\coloneqq \bigcup_{\gamma\in Y} \bigcup_{x\in A_\gamma} B(x).  
\]
By assumption, $W$ contains elements of arbitrarily small valuation. Suppose $D_1,\ldots, D_k$ form a cell decomposition of $W$ (over $\emptyset$) with 
\[
D_i\coloneqq \{x\in K : \alpha_i \ \square_{1,i} \ v(x-a_i) \ \square_{2,i} \ \beta_i \text{ and } x-a_i \in \lambda_i Q_{n_i,m_i}\}.   
\]
For $x\in K$ such that $v(x)\in h(Y)$ and $v(x)<\min_i\{v(a_i), \alpha_i, \beta_i\}$, we have that 
\[
x\in W \text{ if and only if for some $i\in\{1,\ldots,k\}$, $\square_{1,i}=\emptyset$ and  $x \in \lambda_i Q_{n_i,m_i}$}.
\] 
For $m\coloneqq\max\{m_i\}$, there is $\gamma_0\in Y$ such that, for all $\gamma\in Y_{>\gamma_0}$ and all $i\in\{1,\ldots,k\}$
\[
h(\gamma)+m<\min_i\{v(a_i), \alpha_i, \beta_i, g(\gamma_0) \}.
 \]
But then, 
$W\cap (B_{h(\gamma)}(0)\setminus B_{h(\gamma)+1}(0))$ is the union of $\ell$ balls of radius strictly bigger than $h(\gamma)+m$ (since $g$ is increasing) which shows that 
\[
W\cap (B_{h(\gamma)}(0)\setminus B_{h(\gamma)+1}(0)) \neq \bigcup_{i} D_i\cap (B_{h(\gamma)}(0)\setminus B_{h(\gamma)+1}(0))
\]
for sufficiently small values of $h(\gamma)$, which contradicts that $W=\bigcup_{i} D_i$. 
\end{proof}


\section{Nested families and definable completeness}\label{sec:nested families}

\subsection{Definable nested families}\label{sec:denef}

Although the most natural acronym for definable nested families was `denef', avoiding temptation, we will use the shorter `$dnf$'. 

\begin{definition}\label{def:dnf} Let $X\subseteq \Gamma_K\times K$ be a definable set and $\pi_1$ denote the projection onto the first coordinate. We say that $X$ is a \emph{definable nested family}, in short $dnf$, if 
\begin{enumerate}
\item for every $\gamma\in \pi_1(X)$, the fibre $X_\gamma$ is non-empty and
\item $X_{\gamma'}\subseteq X_\gamma$ for every $\gamma,\gamma'\in \pi_1(X)$ such that $\gamma<\gamma'$.
\end{enumerate}
A $dnf$ $X$ is said to be a \emph{strict $dnf$} if moreover 
\begin{enumerate}
\item[(2')] $X_{\gamma'}\subsetneq X_\gamma$ for every $\gamma,\gamma'\in \pi_1(X)$ such that $\gamma<\gamma'$.
\end{enumerate}
\end{definition}

\begin{convention}\label{convention}
Let $X$ be a $dnf$. For $\pi_1$ and $\pi_2$ the projections onto the first and second coordinates, we set 
\[
Y\coloneqq\pi_1(X) \hspace{2cm} Z\coloneqq\pi_2(X).
\]
For a subset $Y'\subseteq Y$, we define the subfamily $X_{|Y'}$ as $X_{|Y'}\coloneqq\{(\gamma,x)\in X : \gamma\in Y'\}$. We say that $X$ has \emph{non-empty intersection} if $\bigcap_{\gamma\in Y} X_\gamma\neq\emptyset$.
We let 
\[
\eta\colon Z\to Y\cup\{+\infty\}
\]
be the definable function given by 
\[
\eta(x)\colon 
\begin{cases}
\gamma & \text{ if } x\in X_\gamma \text{ and } (\forall \gamma'\in Y_{>\gamma})(x\notin X_{\gamma'})\\
+\infty & \text{otherwise},\\
\end{cases}
\]
picking the biggest $\gamma\in Y$ such that $x\in X_\gamma$ if existing, and $+\infty$ if $x$ lies in the intersection of all $X_\gamma$. Finally, 
since $Y$ is $\cL_{Pres}$-definable, there is a definable successor function on $Y$ defined by 
\[
\gamma\mapsto \gamma^+\coloneqq \min\{\gamma'\in Y: \gamma'>\gamma\}.  
\]
\end{convention}

In view of condition (1) in Definition \ref{def:dnf}, if $Y$ has a maximal element then $X$ has non-empty intersection. On the other hand, if $Y$ has no maximal element, by Remark \ref{pubdd} $Y$ is cofinal in $\Gamma_K$. 

\begin{lemma}\label{lem:strict-extract} Let $X$ be a $dnf$ with empty intersection. Then there is a cofinal definable subset $Y'\subseteq Y$ such that $X_{|Y'}$ is a strict $dnf$.  
\end{lemma}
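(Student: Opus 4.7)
\emph{Proof plan.} The idea is to pass to the top of every plateau on which the family is constant. For $\gamma \in Y$ set
\[
[\gamma] \coloneqq \{\gamma' \in Y : X_{\gamma'} = X_\gamma\},
\]
which, by the nesting property, is a convex subset of $Y$ and is uniformly definable in $\gamma$.

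The crux of the argument is to show that every $[\gamma]$ is bounded above in $\Gamma_K$. Suppose for a contradiction that some $[\gamma_0]$ is unbounded; by Remark \ref{pubdd} it is then cofinal in $\Gamma_K$. For any $\delta \in Y$ with $\delta > \gamma_0$, choose $\gamma'' \in [\gamma_0]$ with $\gamma'' > \delta$; the chain $X_{\gamma_0} = X_{\gamma''} \subseteq X_\delta \subseteq X_{\gamma_0}$ forces $\delta \in [\gamma_0]$. Combined with $X_\delta \supseteq X_{\gamma_0}$ for $\delta \leq \gamma_0$ in $Y$, this gives $\bigcap_{\delta \in Y} X_\delta = X_{\gamma_0}$, which is non-empty by Definition \ref{def:dnf}(1), contradicting the hypothesis that $X$ has empty intersection. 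This boundedness step is the only place where the empty intersection assumption intervenes, and I expect it to be the main (small) obstacle.

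Once boundedness of $[\gamma]$ is in hand, Remark \ref{pubdd} supplies a maximum $\mu(\gamma) \in [\gamma]$, yielding a definable function $\mu\colon Y \to Y$ with $\mu(\gamma) \geq \gamma$. I would then take
\[
Y' \coloneqq \{\gamma \in Y : \gamma = \mu(\gamma)\},
\]
which equals $\mu(Y)$, hence is definable and cofinal in $Y$ (and therefore in $\Gamma_K$). Strictness of $X_{|Y'}$ is immediate: if $\gamma_1 < \gamma_2$ both lie in $Y'$, then $\gamma_2 > \gamma_1 = \mu(\gamma_1)$ places $\gamma_2$ outside $[\gamma_1]$, so $X_{\gamma_2} \neq X_{\gamma_1}$, and the nesting inclusion $X_{\gamma_2} \subseteq X_{\gamma_1}$ is promoted to a strict one.
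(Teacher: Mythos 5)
Your proof is correct and follows essentially the same route as the paper: your $\mu(\gamma)=\max[\gamma]$ coincides with the paper's definable function $\mu(\gamma)=\min\{\gamma'\in Y:\gamma'\geqslant\gamma \text{ and } X_{{\gamma'}^{+}}\subsetneq X_\gamma\}$, and both arguments take $Y'=\mu(Y)$. The only cosmetic difference is that you justify well-definedness by bounding the plateau $[\gamma]$ and invoking Remark \ref{pubdd}, whereas the paper uses the empty-intersection hypothesis to make the minimum exist directly.
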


\begin{proof} Consider the definable function $\mu\colon Y\to Y$ defined by 
\[
\mu(\gamma)\coloneqq\min\{\gamma'\in Y: \gamma'\geqslant\gamma \text{ and } X_{{\gamma'}^{+}} \subsetneq X_\gamma \}. 
\]
Since $X$ has empty intersection, $\mu$ is well-defined. Note moreover that $\mu$ is monotone increasing. We show that $Y'\coloneqq\mu(Y)$ satisfies the desired property. Since $\mu(\gamma)\geqslant \gamma$ and $\mu$ is monotone, $Y'$ is cofinal. To show that $X_{|Y'}$ is strict, pick $\mu(\gamma),\mu(\delta)\in Y'$ such that $\mu(\gamma)<\mu(\delta)$ for $\gamma,\delta\in Y$. This implies that $\mu(\gamma)<\delta$ (indeed, arguing by the contrapositive, if $\delta\leqslant\mu(\gamma)$ holds, then $\mu(\delta)\leqslant \mu(\mu(\gamma))=\mu(\gamma)$). Therefore, $X_{\mu(\delta)}\subseteq X_{\delta} \subseteq X_{\mu(\gamma)^+}\subsetneq X_{\mu(\gamma)}$, which shows what we wanted.  
\end{proof}

By cell decomposition in $\Gamma_K$ we obtain as a corollary 

\begin{corollary}\label{cor:gamma} Let $X$ be a $dnf$ with empty intersection. Then, there are integers $k,n\geqslant 1$ and $\alpha\in Y$ such that the $\Gamma_K$-cell
\begin{equation}\label{eq:gammacell}\tag{E3}
C\coloneqq \left\{\gamma \in \Gamma_K \left|\begin{array}{l} \alpha < \gamma \\  \gamma \equiv k \ (\emph{\modus} n) \end{array}\right\}\right.,
\end{equation} 
is a subset of $Y$. Moreover, we may assume $n\geqslant 2$ by replacing it by $2n$. \qed 
\end{corollary}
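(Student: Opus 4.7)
The plan is to extract the desired cell directly from a Presburger cell decomposition of $Y$. First, since $X$ has empty intersection, condition (1) of Definition \ref{def:dnf} forces $Y$ to have no maximal element, so by Remark \ref{pubdd} $Y$ is cofinal in $\Gamma_K$. By Theorem \ref{thm:semialgpres}, the induced structure on $\Gamma_K$ is that of a pure $\mathbb{Z}$-group, hence $Y$ is $\LPres$-definable.

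Next, I would invoke the one-variable case of Presburger cell decomposition (a special instance of the mechanism behind Theorem \ref{thm:raf-function}) to partition $Y$ into finitely many $\Gamma_K$-cells as in (E1) over $S=\emptyset$; each such cell has the shape $\alpha\ \square_1\ \gamma\ \square_2\ \beta$ and $\gamma\equiv k\ (\modus n)$, for some $\alpha,\beta\in \Gamma_K$, $\square_i\in\{<,\emptyset\}$, and integers $0\leqslant k<n$. Since a finite union of sets bounded above is bounded above while $Y$ is cofinal in $\Gamma_K$, at least one cell $C_0$ in this decomposition must be unbounded above, i.e., its right-hand symbol $\square_2$ is empty. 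If the left-hand symbol $\square_1$ is also empty, I would shrink $C_0$ by intersecting with $\{\gamma : \alpha'<\gamma\}$ for any $\alpha'\in Y$; the result is still contained in $Y$. This gives a cell $C$ of the exact form (E3) with $C\subseteq Y$.

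For the ``moreover'' clause, given $C$ with modulus $n$, I would pick any integer $k'\in\{0,\ldots,2n-1\}$ with $k'\equiv k\ (\modus n)$, and replace $C$ by the subset
\[
\{\gamma\in\Gamma_K : \alpha<\gamma \text{ and } \gamma\equiv k'\ (\modus 2n)\}\subseteq C\subseteq Y,
\]
which is a $\Gamma_K$-cell with modulus $2n\geqslant 2$.

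The only (mild) obstacle is ensuring that the cell decomposition invoked genuinely yields cells in the shape (E1); this is routine once Theorem \ref{thm:semialgpres} reduces us to working inside a pure $\mathbb{Z}$-group, where classical Presburger cell decomposition applies.
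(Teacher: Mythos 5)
Your proof is correct and follows essentially the same route the paper intends (the corollary is stated with only a \qed, the implicit argument being exactly this: $Y$ is cofinal by Remark \ref{pubdd}, $\LPres$-definable by Theorem \ref{thm:semialgpres}, so Presburger cell decomposition yields a cofinal cell with $\square_2=\emptyset$). The only micro-adjustment is that the step of intersecting with $\{\gamma:\alpha'<\gamma\}$ for $\alpha'\in Y$ is also needed when $\square_1$ is $<$ but the given left endpoint does not lie in $Y$, which works for the same reason ($Y$ is cofinal).
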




The next step towards Theorem \ref{thm:A} is to prove the special case in which all fibres are balls, that is, to show that $P$-minimal fields are definably complete.

%

\begin{proposition}\label{prop:defcomplete} Every $P$-minimal field is definably complete, that is, every $dnf$ of balls has non-empty intersection.  
\end{proposition}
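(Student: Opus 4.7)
The approach is proof by contradiction: assuming $X$ is a $dnf$ of balls with $\bigcap_{\gamma\in Y}X_\gamma=\emptyset$, I aim to produce a definable $c\in K$ lying in every $X_\gamma$. Using Lemma \ref{lem:strict-extract} followed by Corollary \ref{cor:gamma}, I replace $X$ by a cofinal sub-$dnf$ so that $X$ is strict and $Y=\pi_1(X)$ is a $\Gamma_K$-cell of the form $\{\gamma : \alpha<\gamma,\, \gamma\equiv k\ (\modus n)\}$ with $n\geqslant 2$. The radius function $r\colon Y\to\Gamma_K$, $r(\gamma)=\rad(X_\gamma)$, is definable; by Theorem \ref{thm:raf-function} and strictness, after a further cofinal restriction of $Y$ I may assume $r$ is linear and strictly increasing, so that $r(Y)$ is cofinal in $\Gamma_K$.

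Next, I apply the clustered cell decomposition (Theorem \ref{thm:celldecomp}) to $X\subseteq Y\times K$ to obtain a finite partition $X=X_1\sqcup\cdots\sqcup X_d$ into $K$-cells over $Y$. By pigeonhole some cell, say $X_1$, has $\pi_1(X_1)$ cofinal in $Y$, and after another cofinal restriction I may assume $\pi_1(X_1)=Y$. The easy case is when $X_1$ is a classical $0$-cell $C^\sigma$: then $(X_1)_\gamma=\{\sigma(\gamma)\}\subseteq X_\gamma$ for each $\gamma\in Y$, and extending $\sigma$ trivially (by $0$) to all of $\Gamma_K$ allows me to apply Lemma \ref{lem:finiteimage} to conclude that $\sigma(Y)$ is finite. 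Hence $\sigma$ equals some constant $c\in K$ on a cofinal $Y'\subseteq Y$, and by nestedness $c\in \bigcap_{\gamma\in Y}X_\gamma$, a contradiction.

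The hard case is when $X_1$ is a classical $1$-cell or a clustered cell, which is the typical situation when $K$ lacks definable Skolem functions. For a classical $1$-cell the same finite-image argument produces a constant center $c\in K$; either $c\in X_\gamma$ for cofinally many $\gamma$ (yielding the contradiction at once) or the coset condition $t-c\in \lambda Q_{n,m}$ forces $v(t-c)$ to take a fixed value $v_0$ for all $t\in X_\gamma$ and cofinally many $\gamma$, whereupon $X_\gamma$ is contained in the fixed proper sub-ball $B\subsetneq K$ determined by the $\lambda Q_{n,m}$-coset of $t-c$ for all sufficiently large $\gamma$. One then restricts the whole $dnf$ to $B$ and iterates. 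The clustered case is analogous but requires Lemma \ref{lem:downwards} applied to the multi-ball $\Sigma$ of $X_1$ to control the $\ell$ center balls relative to a chosen base point, together with Lemma \ref{lem:finiteimage} and the finiteness of the residue field to select a definable branch ending at a point of $K$. The main obstacle is to show that this iteration terminates after finitely many steps, for instance by identifying a strictly decreasing complexity invariant of the successive cell decompositions that forces us eventually into the classical $0$-cell case.
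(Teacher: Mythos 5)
Your strategy is not the paper's, and it contains a genuine gap that you yourself flag: the iteration in your ``hard case'' has no termination argument, and I do not see how to supply one. Restricting the $dnf$ to the sub-ball $B$ just produces another $dnf$ of balls with empty intersection of exactly the same kind, so nothing has decreased; and in the clustered case, ``selecting a definable branch ending at a point of $K$'' is precisely the content of the proposition you are trying to prove. The paper does run an induction of the sort you are hoping for --- on the couple $(\ell,r)$ recording the orders of the multi-balls in a clustered cell decomposition --- but only in the proof of Theorem \ref{thm:A}, and the base case $(1,1)$ of that induction is exactly a $dnf$ of balls, i.e.\ Proposition \ref{prop:defcomplete} itself. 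So decomposing $X\subseteq\Gamma_K\times K$ into possibly clustered cells over $Y$ and iterating is circular: the cells you cannot handle are precisely those whose resolution requires definable completeness. (Your classical $0$-cell and constant-center observations via Lemma \ref{lem:finiteimage} are fine, but they only cover the case where a definable center already exists, which is the case that never causes trouble.)

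The paper's proof avoids two-variable cell decomposition entirely. After normalizing so that $X_\gamma$ is a ball of radius $\gamma$ and $Y$ is a $\Gamma_K$-cell with modulus $n\geqslant 2$, it builds the one-variable auxiliary set
\[
W\coloneqq\{x\in X_{\gamma_0}: (\forall y\in X_{\eta(x)^+})(\eta(x)=v(x-y) \text{ and } \mathrm{ac}_1(x-y)=1)\},
\]
which selects, inside each $X_\gamma$, exactly one of the $q$ residue subballs of radius $\gamma+1$. Since $W$ is a definable subset of $K$ in a single variable, $P$-minimality makes it $\Lring$-definable, and Denef's classical cell decomposition over $\emptyset$ writes it as a finite union of cells $D$ with honest centers $\sigma_D\in K$; this is where the finitely many candidate limit points come from, with no Skolem functions and no induction. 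If no $\sigma_D$ lies in every $X_\gamma$, then for $\gamma$ large the whole ball $X_\gamma$ sits in a single coset of $\lambda_D Q_{n_D,m_D}$ relative to $\sigma_D$, hence is either contained in or disjoint from $D$; containment is impossible because $W$ never contains a full $X_{\gamma'}$ (it omits $q-1$ of its children, and $\gamma+1\notin Y$ since $n\geqslant 2$), so $X_\gamma$ is eventually disjoint from every $D$, contradicting $W\cap X_\gamma\neq\emptyset$. The construction of $W$ is the missing idea in your proposal.
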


\begin{proof}
Suppose not and let $X$ be a $dnf$ which is a counterexample. By Lemma \ref{lem:strict-extract} we may assume that $X$ is a strict $dnf$. Let $\delta\colon  Y\to\Gamma_K$ be the definable function sending $\gamma$ to the radius of the ball $X_\gamma$. Replacing $Y$ by $\delta(Y)$, we may assume that $X_\gamma$ is a ball of radius $\gamma$ for all $\gamma\in Y$. By Corollary \ref{cor:gamma}, we may furthermore assume that $Y$ is a $\Gamma_K$-cell defined as in (\ref{eq:gammacell}) for an integer $n\geqslant 2$. Moreover, our assumptions imply that $\eta(Z)\subseteq Y$, so no element in $Z$ has $+\infty$ as its image. Let $\gamma_0$ be the minimal element in $Y$. Consider the definable set 
\[
W\coloneqq\{x\in X_{\gamma_0}: (\forall y\in X_{\eta(x)^+})(\eta(x)=v(x-y) \text{ and } \ac1(x-y)=1)\}. 
\]
Let us first give a geometrical description of the set $W$. For $q$ equal to the cardinality of the residue field $k_K$, each ball $X_\gamma$ is the disjoint union of exactly $q$ subballs of radius $\gamma+1$. For each $\gamma\in Y$, the set $W$ contains exactly one of these subballs. Figure \ref{fig1} shows a picture of $W$. 
\begin{figure}[h]
\caption{The set $W$ corresponds to the union of the grey balls.}
\begin{center}
\includegraphics[scale=0.7]{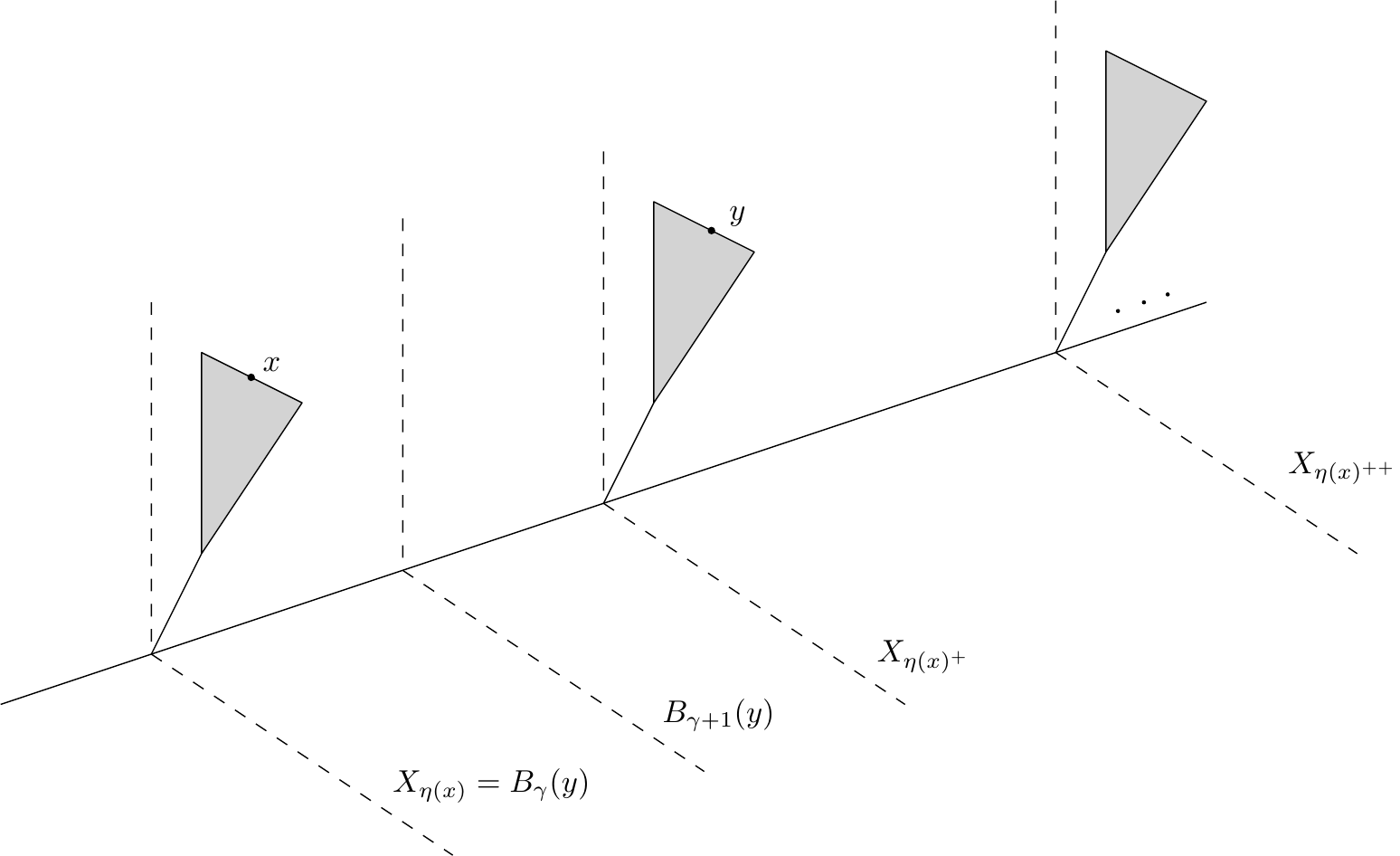}
\end{center}
\label{fig1}
\end{figure}

By $P$-minimality, the set $W$ is $\Lring$-definable, and thus, by Denef's classical cell decomposition, there is a finite set of classical $K$-cells $\mathcal{D}$ such that $W$ is the disjoint union of all $D\in \mathcal{D}$, where 
\[
D \coloneqq  \{x\in K: \alpha_D \ \square_{D,1} \ v(x-\sigma_D) \ \square_{D,2} \ \beta_D\text{ and } x-\sigma_D \in \lambda_D Q_{n_D,m_D}\}, 
\]
with $\alpha_D,\beta_D\in \Gamma_K$, $\sigma_D,\lambda_D\in K$ and $n_D,m_D\in \NN^*$. 

\begin{claim}\label{claim:1} For no $\gamma\in Y$ and no cell $D\in\mathcal{D}$ we have that $X_\gamma\subseteq D$. 
\end{claim}
Suppose $X_\gamma\subseteq D$ for some $\gamma\in Y$ and some cell $D\in\mathcal{D}$. This implies that $X_\gamma\subseteq W$. Let $x\in X_\gamma$ be such that $\eta(x)=\gamma$ and let $y\in X_\gamma$ be such that $y\in X_{\eta(x)^+}$. Then, the ball $B_{\gamma+1}(y)\subseteq W$. By our choice of $Y$ (i.e., $n\geqslant 2$), $\gamma+1\notin Y$, which implies that $W\cap B_{\gamma+1}(y)\neq B_{\gamma+1}(y)$, a contradiction. This shows the claim. 

\

Fix $D\in\mathcal{D}$ and let $\gamma_D\in Y$ be such that $\sigma_D\notin X_{\gamma_D}$ (which exists since otherwise $\sigma_D$ witnesses already that $X$ has non-empty intersection). Since $Y$ is cofinal in $\Gamma_K$ we may further suppose, possibly replacing $\gamma_D$ by a bigger value in $Y$, that for all $x,y\in X_{\gamma_D}$
\begin{equation}\label{eq:1}\tag{E4}
v(\sigma_D-x)+m_D=v(\sigma_D-y)+m_D<v(x-y). 
\end{equation}
Suppose that $X_{\gamma_D}\cap D\neq\emptyset$. In this case, equation (\ref{eq:1}) implies that $X_{\gamma_D}\subseteq D$ contradicting the claim, so $X_{\sigma_D}\cap D=\emptyset$ for every $D\in\mathcal{D}$. To conclude, take $\gamma\in Y$ such that $\gamma>\gamma_D$ for all $D\in\mathcal{D}$, which exists since $Y$ is cofinal in $\Gamma_K$. By construction $X_\gamma$ has empty intersection with every cell $D$, which contradicts that $\mathcal{D}$ is a decomposition of $W$ as $W\cap X_\gamma\neq\emptyset$ for every $\gamma\in Y$.  
\end{proof}

We are ready to show Theorem \ref{thm:A} which we now rephrase: 

\begin{customthm}{(A)}\label{thm:A} Let $X\subseteq \Gamma_K\times K$ be a $dnf$ of closed and bounded sets. Then $\bigcap_{\gamma\in Y} X_\gamma\neq\emptyset$.
\end{customthm}

\begin{proof} By Lemma \ref{lem:strict-extract} we may assume $X$ is a strict $dnf$ and $Y$ is cofinal in $\Gamma_K$ (otherwise the result follows directly). Moreover, we may also suppose that $Z$ is bounded. By clustered cell decomposition (Theorem \ref{thm:celldecomp}), $X$ is equal to a finite disjoint union of $K$-cells $X_1,\ldots, X_d$ over $\Gamma_K$ for some positive integer $d$. Possibly replacing $Y$ by a cofinal subset, we may further assume none of these cells has empty fibres. We obtain the result by a series of cases and reductions. 

\

\emph{Step 1:} We may assume each $X_i$ is a clustered $K$-cell. For suppose $X_i$ is a classical $K$-cell. By Remark \ref{rem:classical-cells} and since $Z$ is bounded, we may assume that 
\[
(\gamma,x)\in X_i \Leftrightarrow \gamma\in Y \text{ and } \ \alpha(\gamma) < \ v(t-c_i(\gamma)) \ \text{ and } t-c_i(\gamma) \in \lambda Q_{n_i,m_i}\,,  
\]
where $c_i\colon Y\to K$ is a definable function. By Lemma \ref{lem:finiteimage}, $c_i$ has finite image, so by taking a cofinal subset of $Y$, we may suppose $c_i$ is constant, say with value $a_0\in K$. Since each $X_\gamma$ is closed, $a_0$ belongs to the intersection of $X$. 

\

\emph{Step 2:} By Step 1, suppose $X_1$ is a clustered $K$-cell with associated multi-ball $\Sigma$ over $\Gamma_K$ of smallest order $\ell\geqslant 1$ among $X_1,\ldots, X_d$, and let $r\geqslant 1$ be the number of clustered cells with associated multi-ball of order $\ell$. Call $(\ell,r)$ the \emph{couple associated to the partition of $X$ into
cells $X_1,\ldots, X_d$}. Note that if $(1,1)$ is the couple associated to some partition of $X$, then $X$ is a $dnf$ of balls and has non-empty intersection by Proposition \ref{prop:defcomplete}. By induction on associated couples (in the lexicographic order), we may further suppose that any other $dnf$ of closed and bounded sets admitting a cell decomposition into clustered $K$-cells with non-empty fibres and smaller associated couple than $(\ell,r)$ has non-empty intersection. Let $A\subseteq Y\times (T(K)\setminus K)$ be the definable set such that for every $\gamma\in Y$, the fibre $A_\gamma$ consists precisely of the set of the $\ell$ closed balls of $\Sigma_\gamma$. Each fibre $A_\gamma$ is thus a finite antichain in $T(K)\setminus K$ such that $\rad(x)=\rad(y)$ for all $x,y\in A_\gamma$ (by definition of multi-ball). In particular, the definable function 
\[
g\colon Y\to \Gamma_K, \hspace{1cm} \gamma\mapsto \rad(x) \text{ for some (any) $x\in A_\gamma$}
\] 
is well-defined. By Theorem \ref{thm:raf-function} and Corollary \ref{cor:gamma}, we may further assume that $g$ is linear and hence either constant or strictly increasing. Note that $g$ cannot be strictly decreasing since $Z$ is bounded. Given $c\in K$, consider the following definable functions
\begin{align*}
& h_c^{\min}\colon Y\to \Gamma_K, &  \hspace{1cm} \gamma\mapsto \min\{\rad(c\wedge x) : x\in A_\gamma\} \\
& h_c^{\max}\colon Y\to \Gamma_K, & \hspace{1cm} \gamma\mapsto \max\{\rad(c\wedge x) : x\in A_\gamma\}  \\
& h_c^{\dif}\colon Y\to \Gamma_K,   & \hspace{1cm}  \gamma\mapsto h_c^{\max}(\gamma)-h_c^{\min}(\gamma).
\end{align*}

\

\emph{Step 3:} We may suppose that for each $c\in K$ there is $\varepsilon_c$ such that $h_c^\dif(\gamma)=0$ for all $\gamma\in Y_{>\varepsilon_c}$. Indeed, if $\ell=1$, then $h_c^{\max}=h_c^{\min}$ and the result is trivial. So suppose $\ell>1$ and that there is $c\in K$ such that the set $Y'=\{\gamma\in Y : h_c^\dif(\gamma)> 0\}$ is cofinal in $Y$. Replacing $Y$ by $Y'$, we may then suppose $h_c^\dif(\gamma)>0$ for all $\gamma\in Y$. But then we can express $\Sigma$ as a disjoint union $\Sigma=\Sigma_0\cup\Sigma_1$ where  
\begin{align*}
&\Sigma_0 \coloneqq \{(\gamma,t) \in Y\times K : t\in B(x), x\in A_\gamma, \rad(x\wedge c) = h_c^{\min}(\gamma) \} \\
&\Sigma_1 \coloneqq \Sigma\setminus \Sigma_0. 
\end{align*}
Both $\Sigma_0$ and $\Sigma_1$ are multi-balls. Possibly replacing $Y$ by a cofinal subset, we may suppose they are multi-balls of fixed orders $\ell_0,\ell_1<\ell$. This shows that we can express $X_1$ as a disjoint union of two clustered $K$-cells with multi-balls of order smaller than $\ell$, and the result follows by induction on associated couples. This shows the claim of this step. Simplifying notation, for each $c\in K$, we let $h_c\colon Y_{>\varepsilon_c} \to \Gamma$ denote the definable function $h_c(\gamma)=h_c^{\min}(\gamma)=h_c^{\max}(\gamma)$. 

\

\emph{Step 4:} Let $I\subseteq Y$ be a cofinal well-ordered subset and $(x_\gamma)_{\gamma\in I}$ be a sequence such that $x_\gamma\in A_\gamma$ for each $\gamma\in I$. By Step 3, the hypotheses of Lemma \ref{le lemme} are satisfied. Indeed, for every $c\in K$, function $f_c\colon I_{>\varepsilon_c}\to \Gamma_K$ given by $\gamma\mapsto \rad(c\wedge x_\gamma)$ is the trace of the definable function $h_c$ above defined. Therefore,  by Lemma \ref{le lemme}, there is a cofinal subset $I'\subseteq I$ such that one of the following holds: 
\begin{enumerate}
\item $(x_\gamma)_{\gamma\in I'}$ is constant;
\item $f_0|I'$ is strictly decreasing;
\item the set 
\[
J\coloneqq\{\gamma\in I  :  (\forall\delta\in I_{>\gamma})(\exists \gamma'\in I_{>\delta})(\exists \gamma''\in I_{>\gamma'})(
x_\gamma \wedge x_{\gamma'} = x_\gamma \wedge x_{\gamma''}
                        				     < x_{\gamma'} \wedge x_{\gamma''})\}
\]
is cofinal in $\Gamma_K$. 
\end{enumerate}

In the remaining steps we deal with each of these cases. 

\

\emph{Step 5:} Suppose (1) holds and let $x$ denote the constant value of $(x_\gamma)_{\gamma\in I'}$. Then, the set $Y'\coloneqq \{\gamma\in Y: x\in A_{\gamma}\}$ is definable and contains $I'$ (so in particular, it is cofinal). Thus, without loss of generality suppose $Y'=Y$. Furthermore, we may suppose $\ell=1$. Indeed, if $\ell>1$, we could express $\Sigma$ as a disjoint union $\Sigma=\Sigma_0\cup\Sigma_1$ where $\Sigma_0=Y\times B(x)$ and $\Sigma_1= \Sigma\setminus \Sigma_0$. Both $\Sigma_0$ and $\Sigma_1$ are multi-balls of smaller order than $\ell$, and the result will follow by induction on associated couples. When $\ell=1$, we have that $\Sigma=Y\times B(x)$ and hence, for all $\gamma\in Y$ 
\[
t\in X_{1,\gamma}\Leftrightarrow (\forall c\in B(x))(\alpha(\gamma) \ < \ v(t-c) \ < \ \beta(\gamma) \text{ and } t-c \in \lambda Q_{n,m}),  
\]
where $\alpha,\beta$ are definable functions and $n,m$ are integers and $\lambda\in K$. By Theorem \ref{thm:raf-function} and possibly replacing $I'$ by a cofinal subset, we may assume that both $\alpha$ and $\beta$ are linear functions. Now, $\beta$ cannot be strictly decreasing since $Z$ is bounded (and no cell has empty fibres). It cannot be strictly increasing either since $\beta(\gamma)<\rad(x)$ (see \ref{rem:classical-cells}). Thus, $\beta$ must be constant. Similarly, $\alpha$ cannot be strictly decreasing since $Z$ is bounded, nor strictly increasing since $\alpha(\gamma)<\beta(\gamma)$ (again, as no cell has empty fibres). Therefore, both $\alpha$ and $\beta$ must be constant functions. But this shows that $X_{1,\gamma}$ is the same set for all $\gamma\in Y$, which yields that any element in $X_{1,\gamma}$ is in the intersection of $X$. 

\

\emph{Step 6:} Let us show (2) cannot hold. For suppose it does. Replacing $Y$ with $Y_{>\varepsilon_0}$ and $I'$ with $I'_{>\varepsilon_0}$, we may suppose $f_0$ is the trace of the definable function $h_0\colon Y\to\Gamma_K$. Since (2) holds, by Theorem \ref{thm:raf-function} and Corollary \ref{cor:gamma}, we may further assume that $h_0$ is strictly decreasing. In particular, $h_0(Y)$ is coinitial in $\Gamma_K$. This contradicts Lemma \ref{lem:downwards}.


\

\emph{Step 7:} Suppose (3) holds. Let us first show that $g$ is strictly increasing. Consider the definable subset of $Y$
\[
Y'\coloneqq \{\gamma\in Y : (\forall \varepsilon \in Y_{>\gamma})(\exists \delta \in Y_{>\varepsilon})(\exists x\in A_\gamma)(\exists y \in A_\varepsilon)(\exists z\in A_\delta)(
x\wedge y<y\wedge z)\}.
\]
By (3), $Y'$ is cofinal in $Y$. Consider the definable subset of $\Gamma_K$ given by 
\[
G\coloneqq \{\rad(x\wedge y)  :  x\in A_\gamma, y\in A_\delta, \gamma,\delta\in Y'\}. 
\]
The set $G$ is definable and, by the choice of $Y'$, it has no maximal element. Then, $G$ is cofinal in $\Gamma_K$, but this cannot be the case if the radius $g$ is constant, as any element in $G$ will be bounded by the constant value of $g$. This shows, $g$ must be strictly increasing. 

Replacing $Y$ by a definable cofinal subset of $Y'$, we may suppose the following: for every $\gamma\in Y$ and every $x\in A_\gamma$, there is $b\in \widehat{K}$ such that for every $\gamma_0\in \Gamma_K$, there are $\varepsilon,\delta\in Y$ with $\gamma<\varepsilon<\delta$, $y\in A_\varepsilon$  and $z\in A_\delta$ such that 
\[
x\wedge y < y\wedge z \text{ and } \gamma_0<\rad(y\wedge z) \text{ and } (x\wedge y)\in \Br(b). 
\]
Indeed, if this condition does not hold for all $x\in A_\gamma$ and all $\gamma$ in a final segment of $Y$, one can again express $\Sigma$ as a disjoint union of two multi-balls of lower order, and the result follows by induction on associated couples. Let $F$ be the set of all such elements $b$ in $\widehat{K}$. We split in two final cases. 

\

\emph{Case 1:} Suppose some $b\in F$ is isolated. Then there is $x_0 \in T(K)\setminus K$ such that  $F\cap B(x_0)=\{b\}$. The set 
\[
\{ x\in T(K)\setminus K : (\exists \gamma \in Y)(x\in A_\gamma\text{ and } x_0<x)\} 
\]
is therefore definable and linearly ordered. Letting $Y'=\{\gamma\in Y :  (\exists x\in A_\gamma)(x_0<x)\}$ and $x_\gamma$ be the unique element in $A_\gamma$ such that $x_0<x_\gamma$, the set  
\[
X'= \bigcup_{\gamma \in Y'} \{\gamma\} \times B(x_\gamma)
\]
is a $dnf$ of balls. By Proposition \ref{prop:defcomplete}, $X'$ has non-empty intersection. But the only element in the intersection must be $b$, so $b\in K$. But then $b$ belongs to the intersection of $X$, since the intersection is a closed set.  

\

\emph{Case 2:} No point $b\in F$ is isolated. Let us show this case does not occur. Note that the cardinality of $F$ is at least the cofinality of $\Gamma_K$. Let $\mu$ be a variable of value group sort and $S_\mu(\Gamma_K)$ denote the set of all types in the variable $\mu$ over $\Gamma_K$. Note that since $\Gamma_K$ is stably embedded (Theorem \ref{thm:raf}), the restriction map $\sigma\colon S_\mu(K\cup \Gamma_K)\to S_\mu(\Gamma_K)$ is a bijection. Let $S_\infty(\Gamma_K)$ be the subset of $S_\mu(\Gamma_K)$ consisting of all completions of the partial type at infinity over $\Gamma_K$ (i.e. the partial type containing the formulas $\{\mu>\gamma: \gamma\in \Gamma_K\}$). An element $p(\mu)\in S_\infty(\Gamma_K)$ is determined by the congruences $\mu\equiv k (\modus n)$ it contains, where $k,n$ are positive integers. This yields that the cardinality of $S_\infty(\Gamma_K)$ is $2^{\aleph_0}$. For each $b\in F$, let $p_b(\mu)$ be an element of $S_\mu(K\cup \Gamma_K)$ containing the set of formulas 
\[
\{(\exists x\in A_\mu)(x>y) : y\in \Br(b) \}\cup \{\mu>\gamma: \gamma\in \Gamma_K\}.  
\]
Let $q_b\in S_\infty(\Gamma_K)$ be the image of $p_b$ under $\sigma$. By possibly working in a large elementary extension, we may suppose that $|\Gamma_K|$ is regular and strictly bigger than $2^{\aleph_0}$. We obtain a contradiction by showing that $|S_\infty(\Gamma_K)|\geq |\Gamma_K|>2^{\aleph_0}$. Assume there is an increasing chain $(F_i)_{i<|\Gamma_K|}$ of subsets of $F$ such that 
\begin{enumerate}
\item $|F_i|<|\Gamma_K|$ for each $i<|\Gamma_K|$;
\item if $b,b'\in F_i$ are different, then $q_b\neq q_{b'}$. 
\end{enumerate}
Setting $F'\coloneqq\bigcup_{i<|\Gamma_K|} F_i$, we have that $|F'|\geqslant |\Gamma_K|$ and $q_b\neq q_{b'}$ for any two elements in $F'$, which shows the above bound. It remains to build the chain. Fix some element $b_0\in F$ and set $F_0=\{b_0\}$. Suppose $F_j$ has been defined for all $j<i$. If $i$ is a limit ordinal, we set $F_i=\bigcup_{j<i}F_j$. So suppose $i=j+1$.  For each $b\in F_j$, let $\gamma_b$ be a realization of $q_b$ (in $\Gamma_L$ for some $K\prec L$). For each $x\in A_{\gamma_b}$ there is at most one element $b'\in F$ such that every $y\in \Br(b')$ lies below $x$. Let $W\subseteq F$ be the set of all such elements $b'\in F$. Since $|W|\leqslant |F_i|\times \ell <|\Gamma_K|\leqslant |F|$, let $b$ be any element in $F\setminus W$ and set $F_{i+1}=F_i\cup \{b\}$. By the choice of $b$, $q_b\neq q_{b'}$ for every $b'\in F_i$. 
\end{proof}

We have now all ingredients to show that every $P$-minimal field satisfies the extreme value property. 

\begin{customthm}{(B)}[Extreme value property] Let $U\subseteq K$ be a closed and bounded set and $f\colon  U \to \Gamma_K$ be a definable continuous function. Then $f(U)$ admits a maximal value.
\end{customthm}

\begin{proof} Let $U\subseteq K$ be closed and bounded and $f\colon  U \to \Gamma_K$ be a definable continuous function. By Remark \ref{pubdd}, if $f(U)$ has no maximal element $\Gamma_K$, then $f(U)$ is cofinal in $\Gamma_K$. For each $\gamma\in f(U)$ let
\[
X_\gamma=cl\left(\bigcup \{ (f^{-1}(\gamma'): \gamma'\in f(U)\text{ and } \gamma\leq\gamma' \}\right), \text{ and }
\]
\[
X=\bigcup_{\gamma\in f(U)} \{\gamma\}\times X_\gamma.
\]
We first show that $X$ is a strict $dnf$ of closed and bounded sets. Each fibre $X_\gamma$ is closed by definition. Since $U$ is closed, $X_\gamma\subseteq U$ for each $\gamma\in f(U)$. Therefore, since $U$ is bounded, so is $X_\gamma$. It remains to show it is nested so let $\gamma,\gamma'\in f(U)$ be such that $\gamma<\gamma'$. By definition of $X$, we trivially have the inclusion $X_{\gamma'}\subseteq X_{\gamma}$. That the inclusion is strict follows by the continuity of $f$. Indeed, let $x\in U$ such that $f(x)=\gamma$, so $x\in X_{\gamma}$. By continuity $f^{-1}(\gamma)$ is open and contains $x$, and has empty intersection with $f^{-1}(\gamma'')$ for all $\gamma''\in f(U)$ such that $\gamma'\leq\gamma''$, hence $x\notin X_{\gamma'}$. This shows that $X$ is a strict $dnf$ of closed and bounded sets. By Theorem \ref{thm:A}, there exists $x\in X_\gamma$ for all $\gamma\in f(U)$. In particular, $x\in U$ so let $f(x)=\gamma_0$ and take $\gamma\in f(U)$ such that $\gamma>\gamma_0$. Since $x\in X_{\gamma}$, there is $\gamma'\geq\gamma$ such that  $x\in cl(f^{-1}(\gamma'))$ which contradicts that $f(x)=\gamma_0$. 
\end{proof}

The following theorem corresponds to \cite[Theorem 4.1]{darniereETAL:2015}.

\begin{theorem}\label{thm:EVPrelPmin} Assume that $(K,\cL)$ is $P$-minimal and satisfies the extreme value property. Then every definable set $X \subseteq \Gamma_K^d \times K$ is $\Lring$-definable, for every $d\geq 0$. \qed
\end{theorem}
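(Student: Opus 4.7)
The plan is to proceed by induction on $d$. The base case $d=0$ is precisely the definition of $P$-minimality: every $\cL$-definable subset of $K$ is already $\Lring$-definable.

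For the inductive step, the first move is to reduce the statement about definable subsets $X \subseteq \Gamma_K^d \times K$ to a statement about definable functions $K \to \Gamma_K$. Fix a definable $X \subseteq \Gamma_K^d \times K$ and, for each $t \in K$, consider the fibre $X^t \subseteq \Gamma_K^d$. By stable embeddedness of $\Gamma_K$ with its pure Presburger structure (Theorem \ref{thm:raf}), $X^t$ is defined by a Presburger formula with parameters in $\Gamma_K$ that may depend on $t$. A standard compactness/uniformity argument yields finitely many Presburger formulas $\varphi_1(\bar\gamma,\bar y_1),\ldots,\varphi_k(\bar\gamma,\bar y_k)$ together with a partition $K = Y_1 \sqcup \cdots \sqcup Y_k$ into definable pieces, and, for each $i$, a definable function $\bar\beta_i\colon Y_i \to \Gamma_K^{n_i}$, such that $X^t = \{\bar\gamma : \varphi_i(\bar\gamma,\bar\beta_i(t))\}$ whenever $t \in Y_i$. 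The pieces $Y_i$ are $\Lring$-definable by $P$-minimality, so the problem reduces to showing that the graph of any definable function $\beta\colon Y \to \Gamma_K$, with $Y\subseteq K$ an $\Lring$-definable set, is itself $\Lring$-definable.

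To handle this functional reduction, I would apply clustered cell decomposition (Theorem \ref{thm:celldecomp}) to the graph of $\beta$, viewed as a definable subset of $K\times\Gamma_K$. This yields a decomposition of $Y$ into finitely many pieces on each of which $\beta$ takes a controlled form — either it factors through a $\Gamma_K$-cell via linear Presburger data (Theorem \ref{thm:raf-function}), or it is "prepared" relative to a classical $K$-cell with center $\sigma$ as $\beta(t) = a\,v(t-\sigma(t)) + \delta(t)$ for integer $a$ and some auxiliary definable $\delta$. The task is now to argue that on each such piece the center $\sigma$ and the correction term $\delta$ may be chosen $\Lring$-definable. This is precisely where the extreme value property (Theorem \ref{thm:B}) enters: closed bounded definable subsets of $K$ admit extremal elements for definable continuous $\Gamma_K$-valued functionals, allowing us to select canonical witnesses (e.g.\ points minimising $v(t-\sigma(t))$ on a suitably chosen compact locus) that are $\Lring$-definable because $P$-minimality pins down the resulting subset of $K$ to be $\Lring$-definable and unique.

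The main obstacle I anticipate is precisely this last step: replacing the auxiliary definable parameters $\bar\beta_i$ and prepared-cell data, a priori only $\cL$-definable, by $\Lring$-definable surrogates. Without definable Skolem functions in hand this cannot be done by naïvely choosing a point in a fibre; one must package choices as extremal values of continuous $\Gamma_K$-valued functionals on closed bounded definable sets, which is exactly what the extreme value property provides, and then feed those values back into Presburger formulas to reconstruct $X$. Once each $\beta_i$ has $\Lring$-definable graph, substituting back into $\varphi_i(\bar\gamma,\bar\beta_i(t))$ exhibits $X \cap (\Gamma_K^d \times Y_i)$ as $\Lring$-definable (since $v^{-1}$ of an $\Lring$-definable condition on $\Gamma_K$ is $\Lring$-definable, again by Theorem \ref{thm:raf}), and taking the finite union over $i$ completes the induction.
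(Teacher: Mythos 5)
This theorem is not proved in the paper at all: it is quoted verbatim from Darni\`ere--Halupczok (\cite[Theorem 4.1]{darniereETAL:2015}) and closed with a \qed, so there is no internal proof to compare against; the comparison has to be with the argument in that reference, which occupies a substantial part of their paper. Your opening reduction is sound and does match the natural first move: by stable embeddedness of $\Gamma_K$ (Theorem \ref{thm:raf}) and compactness, the fibres $X^t\subseteq\Gamma_K^d$ are uniformly Presburger-definable from finitely many definable parameter functions $\bar\beta_i\colon Y_i\to\Gamma_K^{n_i}$, the pieces $Y_i\subseteq K$ are $\Lring$-definable by $P$-minimality, and the whole problem reduces to showing that the graph of a definable function $\beta\colon Y\subseteq K\to\Gamma_K$ is $\Lring$-definable. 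Note, though, that your ``induction on $d$'' does no work after this reduction --- the inductive hypothesis is never invoked.

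The genuine gap is in the functional step, which is the entire content of the theorem. First, you assert that clustered cell decomposition puts $\beta$ into a prepared form $\beta(t)=a\,v(t-\sigma(t))+\delta(t)$ on classical $K$-cells. Cell decomposition is a statement about sets, not about preparing functions, and classical cell \emph{preparation} is precisely what is \emph{not} available here: by Theorem \ref{thm:DH} it is equivalent to Skolem functions plus the extreme value property, and in the Darni\`ere--Halupczok development it is \emph{deduced from} the present theorem. Assuming a prepared form at this stage is therefore either unjustified or circular. Second, your use of the extreme value property is not an argument. The EVP produces a maximal \emph{value} in $\Gamma_K$ of a continuous function on a closed bounded set; it does not let you ``select canonical witnesses'' in $K$ (the locus achieving the maximum is $\Lring$-definable by $P$-minimality but need not be a point, and choosing from it is exactly the Skolem-function problem you correctly flag), and the functions $\beta$, $\sigma$, $\delta$ you would feed into it are neither continuous nor defined on closed bounded sets a priori. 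The actual proof in \cite{darniereETAL:2015} uses the EVP quite differently: roughly, to show that a definable $f\colon K\to\Gamma_K$ is locally bounded and can be compared, up to a bounded and then locally constant discrepancy, with semialgebraic functions of the form $\tfrac{1}{N}v(h(x))$, from which $\Lring$-definability of the graph follows. None of that mechanism is present in your sketch, so the crucial step remains unproved.
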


Theorems \ref{thm:C} and \ref{thm:D} are direct corollaries of Theorem \ref{thm:B} and Theorems \ref{thm:EVPrelPmin} and \ref{thm:DH}, the latter two due to Darni\`ere and Halupczok in \cite{darniereETAL:2015}. 

\

We finish with a short question. In view of the clustered cell decomposition theorem for general $P$-minimal fields, can one provide an analogue of cell preparation for general $P$-minimal fields?

\subsection*{Acknowledgements:} P. Cubides Kovacsics was partially supported by the ERC project TOSSIBERG (Grant Agreement 637027) and individual research grant
\emph{Archimedische und nicht-archimedische Stratifizierungen h\"oherer
Ordnung}, funded by the DFG. F. Delon was partially supported by the Idex Universit\'e de Paris.

\bibliographystyle{amsplain}
\bibliography{biblio}

\end{document}